\numberwithin{equation}{section}
\newtheorem{thm}{Theorem}[section]
\newtheorem{pro}[thm]{Proposition}
\newtheorem{lem}[thm]{Lemma}
\newtheorem{cor}[thm]{Corollary}
\newtheorem*{rem}{Remark}
\theoremstyle{definition}
\theoremstyle{remark}
\numberwithin{equation}{section}
\begin{document}

\title[Local cohomological properties of homogeneous ANR compacta]
{Local cohomological properties of homogeneous ANR compacta}

\author{V. Valov}
\address{Department of Computer Science and Mathematics,
Nipissing University, 100 College Drive, P.O. Box 5002, North Bay,
ON, P1B 8L7, Canada} \email{veskov@nipissingu.ca}

\thanks{The author was partially supported by NSERC
Grant 261914-13.}

 \keywords{Bing-Borsuk conjecture for homogeneous compacta, cohomological carrier, cohomology groups, cohomological membrane, dimensionally full-valued compactum, homogeneous metric $ANR$-compacta}

\subjclass[2010]{Primary 55M10, 55M15; Secondary 54F45, 54C55}
\begin{abstract}
In accordance with the Bing-Borsuk conjecture \cite{bb}, we show that if $X$ is an $n$-dimensional homogeneous metric $ANR$ continuum and $x\in X$, then there is a local basis at $x$ consisting of connected open sets $U$ such that the cohomological properties of $\overline U$ and $bdU$ are similar to the
properties of the closed ball $\mathbb B^n\subset\mathbb R^n$ and its boundary $\mathbb S^{n-1}$. We also prove that a metric $ANR$ compactum $X$ of dimension $n$ is dimensionally full-valued if and only if the group $H_n(X,X\setminus x;\mathbb Z)$ is not trivial for  some $x\in X$, where $\mathbb Z$ is the group of integers. This implies that every $3$-dimensional homogeneous metric $ANR$ compactum is dimensionally full-valued.
\end{abstract}
\maketitle\markboth{}{Homogeneous $ANR$}





\section{Introduction}
The Bing-Borsuk conjecture \cite{bb} asserts that a homogeneous Euclidean neighborhood retract is a topological manifold. In accordance with that conjecture,
we show that the local cohomological structure of any $n$-dimensional homogeneous metric $ANR$ continuum is similar to the local structure of
$\mathbb R^n$, see Theorem 1.1 below. We also establish conditions for a metric $ANR$ compactum $X$ to satisfy the equality $\dim(X\times Y)=\dim X+\dim Y$ for all compact metric spaces $Y$ (any such $X$ is said to be {\em dimensionally full-valued}). It follows from these conditions that every 3-dimensional homogeneous $ANR$ compactum is dimensionally full-valued (Corollary 1.5), thus providing a partial answer to one of the problems
accompanying the Bing-Borsuk conjecture (whether homogeneous metric $ANR$'s are dimensionally full-valued).

Everywhere in this paper by a space we mean a homogeneous metric $ANR$ continuum $X$  with $\dim_GX=n$, where $n\geq 2$ and $G$ is a fixed countable abelian group or a principal ideal domain (PID) with unity.
Reduced \v{C}ech homology $H_n(X;G)$ and cohomology groups $H^n(X;G)$ with coefficient from $G$ are considered everywhere below.
Let us recall that for any abelian group $G$ the cohomology groups $H^n(X;G)$, $n\geq 2$, are isomorphic to the groups $[X,K(G,n)]$ of pointed homotopy classes of maps from $X$ to $K(G,n)$, where $K(G,n)$ is the Eilenberg-MacLane space of type $(G,n)$, see \cite{sp}. The cohomological dimension $\dim_GX$ is the largest integer $m$ such that there exists a closed set $A\subset X$ with $H^m(X,A;G)\neq 0$. Equivalently, $\dim_GX\leq n$ iff every map $f\colon A\to K(G,n)$ can be extended to a map $\tilde f\colon X\to K(G,n)$.

Suppose $(K,A)$ is a pair of closed subsets of a space $X$ with $A\subset K$. Following \cite{bb}, we say that
$K$ is an {\em $n$-homology membrane spanned on $A$ for an element $\gamma\in H_n(A;G)$} provided $\gamma$ is homologous to
zero in $K$, but not homologous to zero in any proper closed subset of $K$ containing $A$. Similarly, $K$ is said to be
an {\em $n$-cohomology membrane spanned on $A$ for an element $\gamma\in H^n(A;G)$} if $\gamma$ is not extendable over $K$,
but it is extendable over every proper closed subset of $K$ containing $A$. Here, $\gamma\in H^n(A;G)$ is not extendable over $K$ means that
$\gamma$ is not contained in the image $j_{K,A}^n\big(H^{n}(K;G)\big)$, where $j_{K,A}^n:H^{n}(K;G)\to H^{n}(A;G)$ is the
homomorphism generated by the inclusion $A\hookrightarrow K$.

We note the following simple fact, which will be used in this paper and follows from Zorn's lemma and the continuity of \v{C}ech cohomology \cite{sp}: {\em If $A$ is a closed subset of a compact space $X$ and $\gamma$ is an element  of
$H^{n}(A;G)$ not extendable over $X$, then there exists an $n$-cohomology membrane for $\gamma$ spanned on $A$}.

We also say that a closed set $A\subset X$ is  {\em a cohomological carrier}  of a non-zero element $\alpha\in H^n(A;G)$ if $j_{A,B}^n(\alpha)=0$ for every proper closed subset $B\subset A$. If $H^n(A;G)\neq 0$, but $H^n(B;G)=0$ for every closed proper subset $B\subset A$, then $A$ is called an {\em $(n,G)$-bubble}.

\begin{thm}
Let $X$ be a homogeneous metric $ANR$ continuum with $\dim_GX=n$, where $G$ is a countable PID with unity and $n\geq 2$.
Then every point $x$ of $X$ has a basis $\mathcal B_x$ of open sets $U\subset X$ satisfying the following conditions:
\begin{itemize}
\item[(1)] $int\overline U=U$ and the complement of $bdU$ has two components, one of which is $U$;
\item[(2)] $H^{n-1}(\overline U;G)=0$ and $\overline{U}$ is an $(n-1)$-cohomology membrane spanned on $bdU$ for any non-zero $\gamma\in H^{n-1}(bdU;G)$;
\item[(3)] $bdU$ is an $(n-1,G)$-bubble and $H^{n-1}(bdU;G)$ is a finitely generated $G$-module.
\end{itemize}
\end{thm}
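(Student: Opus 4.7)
My plan is to construct the basis $\mathcal{B}_x$ by starting from an arbitrary open neighborhood $V\ni x$ and carving out a suitable $U\subset V$, using the homogeneity of $X$ (to transfer cohomological witnesses from other points to $x$) together with the Zorn's-lemma principle stated just before the theorem. First I would exploit the fact that an ANR is locally connected and locally $(n-1)$-connected; this lets me pick a connected open $W$ with $x\in W$, $\overline W\subset V$, and such that $\inte\overline W=W$. Replacing $W$ by the component of $\inte\overline W$ containing $x$ if necessary, the regularity condition in (1) will be easy to arrange at the outset. The two-component requirement for the complement of $\bd U$ is more delicate and I would postpone it: it should follow at the end from an Alexander-type duality argument once the $(n-1)$-cohomological structure of $\bd U$ has been set up.

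The core construction is cohomological. Because $\dim_G X=n$, there exists a closed pair with non-vanishing $H^n$, and passing to the long exact sequence I obtain a non-zero $\gamma\in H^{n-1}(A;G)$ for some closed $A$ that fails to extend over a slightly larger closed set $K$ around $x$. Applying the Zorn's-lemma fact from the excerpt yields an $(n-1)$-cohomology membrane $M$ for $\gamma$ spanned on $A$. Up to shrinking, I would arrange $A,M\subset \overline W$ and declare $\overline U:=M$, with $\bd U:=A$. The membrane property built into $M$ gives half of (2); the statement that $H^{n-1}(\overline U;G)=0$ should then be read off from the long exact sequence of $(M,A)$, using that $\gamma$ is not extendable but its image in $H^n(M,A;G)$ is non-trivial.

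For the bubble condition in (3) and for the ``for every non-zero $\gamma$'' clause in (2), I would next minimise $\bd U$ itself. Applying Zorn within $\bd U$ to the family of closed subsets carrying a non-zero class of $H^{n-1}$ produces a minimal closed $B\subset \bd U$ which is an $(n-1,G)$-bubble. Homogeneity of $X$ combined with the fact that $B$ cannot be a proper subset of the boundary of any suitable regular neighborhood (otherwise the carrier could be further shrunk) should force $B=\bd U$. Finite generation of $H^{n-1}(\bd U;G)$ then follows from the ANR property of $X$: by Cech continuity a compact ANR-like subset has cohomology isomorphic to that of a nerve of a fine open cover, which, being a finite polyhedron modulo choices, yields a finitely generated $G$-module when $G$ is a countable PID.

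The main obstacle I anticipate is the passage from ``some non-zero $\gamma$'' to ``every non-zero $\gamma\in H^{n-1}(\bd U;G)$'' in clause (2), together with the insistence that the two components of $X\setminus \bd U$ are precisely $U$ and its exterior. Making these compatible with the bubble property of $\bd U$ requires a careful interplay between the exact sequence of $(\overline U,\bd U)$, the vanishing $H^{n-1}(\overline U;G)=0$, and a local Alexander-duality or separation argument inside the homogeneous ANR $X$; the latter is where homogeneity (rather than mere local cohomological niceness) does real work, since it is needed to promote a single point with good local structure to every point of $X$.
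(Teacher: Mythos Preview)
Your outline has the right flavor---membranes, carriers, minimization---but there are several genuine gaps that would block the argument as written.

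\textbf{Finite generation of $H^{n-1}(\bd U;G)$.} Your claim that this follows because a compact subset of an ANR has cohomology isomorphic to that of a finite nerve is simply false: $\bd U$ is not assumed to be an ANR, and a compact subset of an ANR can easily have infinitely generated \v{C}ech cohomology (think of a solenoid inside $\mathbb{R}^3$). The paper's argument is quite different: one first shows that every element of the relevant submodule $L_U\subset H^{n-1}(\bd U;G)$ extends to a closed set containing $\bd U$ in its \emph{interior}, and then invokes a sheaf-theoretic result of Bredon to conclude finite generation. This extension step uses the membrane property (Lemma~2.6) in an essential way and is where the PID hypothesis enters.

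\textbf{Producing a bubble by Zorn.} Applying Zorn directly to the family of closed subsets of $\bd U$ with non-trivial $H^{n-1}$ does not work: for a descending chain, continuity of \v{C}ech cohomology gives a direct limit, and there is no reason the limit is non-zero. The paper's Lemma~2.8 produces an $(n-1,G)$-bubble with respect to a \emph{finitely generated} submodule $L$, by induction on the number of generators and by taking successive cohomological carriers. So finite generation must be established \emph{before} the bubble step, not after.

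\textbf{The two-component property.} ``Alexander-type duality'' is not available here: you do not yet know $X$ is anything like a manifold, and $H^{n-1}(\bd U;G)$ could a priori have large rank. The paper instead uses a counting argument of Choi: with a convex metric one gets uncountably many pairwise disjoint separating continua $F_\delta\subset\bd B(x,\delta)$, and Choi's theorem guarantees that for at least one $\delta$ the complement $X\setminus F_\delta$ has exactly two components. This, combined with Effros' theorem to move the resulting set so that it contains $x$, is how condition~(1) is secured; nothing in your sketch supplies a substitute.

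\textbf{Declaring $\overline U:=M$, $\bd U:=A$.} Even granting that $M\setminus A$ is open in $X$ (which already needs Proposition~2.1 and a contractibility hypothesis you have not arranged), you must still verify $A=\bd(M\setminus A)$ and $\inte\overline{M\setminus A}=M\setminus A$. In the paper this identification is not automatic: it is proved in Claim~3 using the bubble property of $F_U$ to rule out $\overline{V}\cap F_U$ being proper in $F_U$.
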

The restriction $n\geq 2$ in Theorem 1.1 is needed because of Lemma 2.8, which is used in the proof of Theorem 1.1.

\begin{rem}
Condition $(1)$ from Theorem $1.1$ implies that $\dim_GbdU=n-1$, see \cite{kktv}.
\end{rem}

\begin{thm}
Let $X$ be as in Theorem $1.1$ and $G$ be a countable group. If a closed subset $K\subset X$ is an $(n-1)$-cohomology membrane spanned on $A$ for some closed set $A\subset K$ and some $\gamma\in H^{n-1}(A;G)$, then $(K\setminus A)\cap\overline{X\setminus K}=\varnothing$.
\end{thm}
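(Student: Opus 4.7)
The plan is to argue by contradiction. Suppose there is a point $x\in(K\setminus A)\cap\overline{X\setminus K}$. Since $A$ is closed and $x\notin A$, choose an open neighbourhood $V$ of $x$ in $X$ with $\overline V\cap A=\varnothing$; because $x\in\overline{X\setminus K}$, the set $V$ also meets $X\setminus K$, so $V\not\subset K$. Set $K_1=K\setminus V$. This is a closed proper subset of $K$ (proper because $x\in V\cap K$) that still contains $A$, so by the membrane hypothesis on $K$ there exists $\alpha\in H^{n-1}(K_1;G)$ with $j^{n-1}_{K_1,A}(\alpha)=\gamma$.

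The strategy is to glue $\alpha$ with an appropriate class on $\overline V$ via Mayer--Vietoris to produce an extension of $\gamma$ over $K$, in direct contradiction to the membrane hypothesis. Set
\begin{equation*}
K_2:=K_1\cup\overline V,\qquad L:=K_1\cap\overline V=K\cap\bd V,
\end{equation*}
so that $K\subset K_2$. Any class $\widetilde\gamma\in H^{n-1}(K_2;G)$ restricting to $\gamma$ on $A$ restricts further to an extension of $\gamma$ over $K$, which is the desired contradiction. The Mayer--Vietoris sequence for the closed cover $K_2=K_1\cup\overline V$,
\begin{equation*}
H^{n-1}(K_2;G)\to H^{n-1}(K_1;G)\oplus H^{n-1}(\overline V;G)\xrightarrow{\psi}H^{n-1}(L;G),
\end{equation*}
shows that it is enough to exhibit $\beta\in H^{n-1}(\overline V;G)$ with $\alpha|_L=\beta|_L$: then $(\alpha,\beta)\in\ker\psi$ lifts to the required class $\widetilde\gamma$, and $\widetilde\gamma|_{K_1}=\alpha$ forces $\widetilde\gamma|_A=\gamma$.

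The main obstacle is producing such a $\beta$, i.e.\ showing that $\alpha|_L$ lies in the image of the restriction $H^{n-1}(\overline V;G)\to H^{n-1}(L;G)$, or equivalently that the connecting homomorphism $H^{n-1}(L;G)\to H^n(\overline V,L;G)$ kills $\alpha|_L$. I would attack this using the freedom in the choice of $V$: since $X$ is a metric $ANR$, take $V$ small enough that $\overline V$ is cohomologically trivial in the relevant degrees (for instance contained in a contractible neighbourhood, so that $H^{n-1}(\overline V;G)$ and $H^n(\overline V;G)$ are under control). Excision then identifies $H^n(\overline V,L;G)$ with a piece of $H^n(X,X\setminus V;G)$, and the hypothesis $\dim_GX=n$ (so that $H^{n+1}$ of every pair vanishes) combined with the fact that $V$ genuinely meets $X\setminus K$---which gives the pair $(X,X\setminus V)$ a substantial ``hole'' outside $K$---should be enough to kill the obstruction class coming from $\alpha|_L$. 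Carrying this cohomological vanishing through is the technical heart of the argument, and I expect it to rely on a local-cohomology lemma of the type developed in Section~2 of the paper. Once it is in place, the glued class $\widetilde\gamma$ contradicts the membrane property of $K$, completing the proof.
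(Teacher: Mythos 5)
Your reduction to producing a class $\beta\in H^{n-1}(\overline V;G)$ with $\beta|_L=\alpha|_L$, where $L=K\cap\bd V$, is sound as bookkeeping, but the step you defer --- killing the obstruction $\alpha|_L$ by shrinking $V$ --- is exactly where the theorem lives, and the vanishing you hope for is false. Note that your sketch nowhere uses the homogeneity of $X$ or the structure built in Theorem 1.1; it only uses that $X$ is a metric $ANR$ with $\dim_GX=n$. Under those hypotheses alone the statement fails: let $X$ be a $2$-disk $\mathbb B^2$ with an arc attached at its center $c$, let $K=\mathbb B^2$, $A=\mathbb S^1$ and $\gamma$ a generator of $H^{1}(A;\mathbb Z)$. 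Then $K$ is a $1$-cohomology membrane for $\gamma$ spanned on $A$ (every proper closed subset of $K$ containing $A$ misses an interior point, so $\gamma$ extends over it via the radial retraction), yet $c\in(K\setminus A)\cap\overline{X\setminus K}$. For a small ball $V$ around $c$ the set $\overline V$ is contractible, so $H^{n-1}(\overline V;G)=0$, while $\alpha|_{K\cap\bd V}$ is a generator of the cohomology of a circle: the obstruction is nonzero and no choice of $V$ removes it. Making $\overline V$ cohomologically trivial makes the gluing harder, not easier, and the excision/$\dim_G$ considerations you invoke cannot produce the required $\beta$.

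The paper's proof runs in the opposite direction and uses homogeneity essentially. It takes $U$ to be one of the special neighborhoods from the basis $\mathcal B_a$ of Theorem 1.1, extends $\gamma$ to $\beta$ on $K\setminus U$, and shows that $\beta_1=\beta|_{\bd U\cap K}$ is necessarily \emph{nonzero} (if it vanished one could glue exactly as you propose and contradict the membrane hypothesis). It then extends $\beta_1$ over all of $\bd U$ (possible because $\dim_G\bd U\le n-1$) and uses that $\overline U$ is itself an $(n-1)$-cohomology membrane spanned on $\bd U$ for every nonzero class (Theorem 1.1(2)) to conclude that $\overline U\cap K$ cannot be a proper subset of $\overline U$; hence $\overline U\subset K$, contradicting $a\in\overline{X\setminus K}$. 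The contradiction is thus geometric --- the good neighborhood gets trapped inside $K$ --- rather than an extension of $\gamma$ over $K$. Your proposal is missing this essential input and, as written, cannot be completed.
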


\begin{cor}
In the setting of Theorem $1.2$, if $U\subset X$ is open and  $f:U\to X$ is an injective map, then  $f(U)$ is open in $X$.
\end{cor}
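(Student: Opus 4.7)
The plan is to show that every point $y\in f(U)$ is interior to $f(U)$. Fix $y=f(x)$ with $x\in U$. First I would invoke Theorem~1.1 to pick $V\in\mathcal{B}_x$ with $\overline V\subset U$; this delivers simultaneously: $\mathrm{int}\,\overline V=V$ (from~(1)), the set $\overline V$ is an $(n-1)$-cohomology membrane spanned on $\mathrm{bd}\,V$ for any nonzero $\gamma\in H^{n-1}(\mathrm{bd}\,V;G)$ (from~(2)), and such a $\gamma$ actually exists because $\mathrm{bd}\,V$ is an $(n-1,G)$-bubble (from~(3)), so $H^{n-1}(\mathrm{bd}\,V;G)\neq 0$.

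Next, since $\overline V$ is compact, $f$ a continuous injection, and $X$ Hausdorff, the restriction $f|_{\overline V}$ is a homeomorphism onto $f(\overline V)$, and $f(\mathrm{bd}\,V)\subset f(\overline V)$ are closed subsets of $X$. Because \v{C}ech cohomology is a topological invariant and the membrane condition is formulated entirely in terms of cohomology of the pair and its proper closed subpairs, the image pair inherits the property: $f(\overline V)$ is an $(n-1)$-cohomology membrane spanned on $f(\mathrm{bd}\,V)$ for the transported class $\widetilde\gamma\in H^{n-1}(f(\mathrm{bd}\,V);G)$, which is nonzero.

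Now I would apply Theorem~1.2 with $K:=f(\overline V)$ and $A:=f(\mathrm{bd}\,V)$ to obtain $(K\setminus A)\cap\overline{X\setminus K}=\varnothing$. Injectivity of $f$ together with $\overline V\setminus\mathrm{bd}\,V=V$ gives $K\setminus A=f(V)$, so $f(V)\subset X\setminus\overline{X\setminus f(\overline V)}=\mathrm{int}\,f(\overline V)$. Since $y\in f(V)\subset\mathrm{int}\,f(\overline V)\subset f(U)$, the open set $\mathrm{int}\,f(\overline V)$ witnesses that $y$ is interior to $f(U)$, and as $y$ was arbitrary, $f(U)$ is open.

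The only delicate point is the transfer of the membrane property from $(\overline V,\mathrm{bd}\,V)$ to $(f(\overline V),f(\mathrm{bd}\,V))$; this is essentially formal once the compactness argument yields a homeomorphism on $\overline V$. Everything else is a direct combination of Theorems~1.1 and~1.2 with elementary point-set topology, so I do not anticipate any substantive obstacle beyond bookkeeping.
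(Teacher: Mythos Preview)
Your proposal is correct and follows essentially the same approach as the paper: pick $W\in\mathcal B_x$ with $\overline W\subset U$, use Theorem~1.1 to see $\overline W$ is an $(n-1)$-cohomology membrane spanned on $bd\,W$, transfer this via the homeomorphism $f|_{\overline W}$, and then apply Theorem~1.2 to conclude $f(\overline W)\setminus f(bd\,W)$ is open in $X$. Your write-up is slightly more explicit about why $f|_{\overline V}$ is a homeomorphism and why $H^{n-1}(bd\,V;G)\neq 0$, but the argument is the same.
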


We already mentioned that a compactum $X$ is  dimensionally full-valued if $\dim (X\times Y)=\dim X+\dim Y$ for any compact metric space $Y$, or equivalently,
$\dim_GX=\dim_{\mathbb Z}X$ for any abelian group $G$.  Recent work of Bryant \cite{bry} was believed to provide a positive answer to the question whether any homogeneous metric $ANR$ is dimensionally full-valued, but Bryant discovered  a gap in the proof of one of the theorems from \cite{bry}. The question whether $\dim (X\times Y)=\dim X+\dim Y$ if both $X$ and $Y$ are homogeneous compact $ANR$s was raised in \cite{clqr} and \cite{f}. Theorem 1.4 below provides some  necessary and sufficient conditions for $ANR$ spaces to be dimensionally full-valued.

\begin{thm}
The following conditions are equivalent for any metric $ANR$-compactum $X$ of dimension $\dim X=n$:
\begin{itemize}
\item[(1)] $X$ is dimensionally full-valued;
\item[(2)] There is a point $x\in X$ with $H_{n}(X,X\setminus x;\mathbb Z)\neq 0$;
\item[(3)] $\dim_{\mathbb S^1}X=n$.
\end{itemize}
\end{thm}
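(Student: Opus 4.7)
The plan is to prove the equivalences via the cycle $(1)\Rightarrow(3)\Rightarrow(2)\Rightarrow(1)$, with the universal coefficient theorem supplying the bridge between $\mathbb{S}^1$-cohomology and integral homology, and a sheaf-theoretic localization used to pass from closed subsets to points.

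The implication $(1)\Rightarrow(3)$ is immediate from the definition: dimensional full-valuedness asserts $\dim_G X=\dim X=n$ for every abelian group $G$, so in particular for $G=\mathbb{S}^1$.

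For $(3)\Rightarrow(2)$ the key observation is that $\mathbb{S}^1=\mathbb{R}/\mathbb{Z}$ is divisible (hence injective) and a cogenerator in the category of abelian groups. The universal coefficient theorem therefore collapses to
\[
H^n(X,A;\mathbb{S}^1)\cong\mathrm{Hom}\bigl(H_n(X,A;\mathbb{Z}),\mathbb{S}^1\bigr),
\]
which is nontrivial iff $H_n(X,A;\mathbb{Z})\neq 0$. So $\dim_{\mathbb{S}^1}X=n$ produces a closed $A\subset X$ with $H_n(X,A;\mathbb{Z})\neq 0$. To pass from this to a single point, I would appeal to the sheaf $\mathcal{H}_n$ of top local homology groups on the $n$-dimensional ANR $X$: by a Borel--Moore / sheaf-cohomology argument, each $H_n(X,A;\mathbb{Z})$ is controlled by the global behaviour of $\mathcal{H}_n$, so vanishing of every stalk $H_n(X,X\setminus x;\mathbb{Z})$ would force $H_n(X,A;\mathbb{Z})=0$ for all closed $A$. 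Hence some $x$ satisfies (2).

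For $(2)\Rightarrow(1)$ the crucial input is that for an $n$-dimensional metric ANR compactum the top local homology $H_n(X,X\setminus x;\mathbb{Z})$ is torsion-free, and hence -- being nonzero by (2) -- contains a copy of $\mathbb{Z}$. Then for every nontrivial abelian group $G$ the universal coefficient theorem gives
\[
H^n(X,X\setminus x;G)\twoheadrightarrow\mathrm{Hom}\bigl(H_n(X,X\setminus x;\mathbb{Z}),G\bigr)\supseteq\mathrm{Hom}(\mathbb{Z},G)\cong G\neq 0.
\]
Replacing $X\setminus x$ by the complement of a small open neighborhood of $x$ and invoking continuity of \v{C}ech cohomology yields a closed $A$ with $H^n(X,A;G)\neq 0$, so $\dim_G X=n$ for every $G$; that is, $X$ is dimensionally full-valued.

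The main obstacle I expect is the sheaf-theoretic localization in $(3)\Rightarrow(2)$: setting up the top local-homology sheaf on an ANR (rather than a manifold) and running the Borel--Moore argument to deduce that vanishing of all stalks kills every relative top homology group. A secondary technical point to verify or cite is the torsion-freeness of $H_n(X,X\setminus x;\mathbb{Z})$ for metric ANR compacta, which is essential in the last step.
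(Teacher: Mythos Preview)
Your cycle $(1)\Rightarrow(3)\Rightarrow(2)\Rightarrow(1)$ is the reverse of the paper's $(1)\Rightarrow(2)\Rightarrow(3)\Rightarrow(1)$, and the ingredients (universal coefficients, localization of top homology to points as in Sklyarenko) overlap substantially. However, your step $(2)\Rightarrow(1)$ contains a genuine error. You correctly note that $H_n(X,X\setminus x;\mathbb Z)$ is torsion-free (this follows from the sequence $0\to\mathrm{Ext}(H^{n+1},\mathbb Z)\to\widehat H_n\to\mathrm{Hom}(H^n,\mathbb Z)\to 0$ and the vanishing of $H^{n+1}$), so it contains a copy of $\mathbb Z$. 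But the displayed inclusion
\[
\mathrm{Hom}\bigl(H_n(X,X\setminus x;\mathbb Z),G\bigr)\supseteq\mathrm{Hom}(\mathbb Z,G)
\]
is false: $\mathrm{Hom}(-,G)$ is contravariant, so an embedding $\mathbb Z\hookrightarrow A$ only induces a map $\mathrm{Hom}(A,G)\to\mathrm{Hom}(\mathbb Z,G)$, which need be neither injective nor surjective. For a concrete failure take $A=\mathbb Q$ and $G=\mathbb Z/2$: then $\mathrm{Hom}(\mathbb Q,\mathbb Z/2)=0$ while $\mathrm{Hom}(\mathbb Z,\mathbb Z/2)\neq 0$. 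To make your argument work you would need $\mathbb Z$ to be a direct \emph{summand}, i.e., $H_n(X,X\setminus x;\mathbb Z)$ to be free, and torsion-freeness alone does not give that.

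The paper sidesteps this by never trying to hit every $G$ directly. From $(2)$ it extracts an element of infinite order in $H^n(X,X\setminus V;\mathbb Z)$, hence $\dim_{\mathbb Q}X=n$, and then quotes the standard Bockstein-type fact that $\dim_{\mathbb Q}X=\dim X$ implies dimensional full-valuedness; the passage through $(3)$ is handled via the inequalities $\dim_{\mathbb Q}X\leq\dim_{\mathbb S^1}X\leq\dim X$ and $\dim_{\mathbb S^1}X\leq\max\{\dim_{\mathbb R}X,\dim X-1\}$. Your argument is easily repaired along the same lines: torsion-free and nonzero does give $\mathrm{Hom}(H_n(X,X\setminus x;\mathbb Z),\mathbb Q)\neq 0$ (since $\mathbb Q$ is injective), hence $\dim_{\mathbb Q}X=n$, and you can then invoke the same classical implication. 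One further caution: the identity $H^n(X,A;\mathbb S^1)\cong\mathrm{Hom}(H_n(X,A;\mathbb Z),\mathbb S^1)$ you use in $(3)\Rightarrow(2)$ is the singular-type universal coefficient formula; for \v Cech theory on compact pairs the natural sequence runs in the opposite direction (expressing $\widehat H_n$ in terms of $H^*$), which is what the paper uses, so you should either justify the singular version through the ANR hypothesis or recast that step accordingly.
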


\begin{cor}
Every homogeneous metric $ANR$ compactum $X$ with $\dim X=3$ is dimensionally full-valued.
\end{cor}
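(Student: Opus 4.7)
By Theorem~1.4 it suffices to verify one of the three equivalent conditions, and I will aim at condition~(2): the existence of some $x\in X$ with $H_3(X,X\setminus x;\mathbb Z)\neq 0$. By homogeneity of $X$ this reduces to checking a single point.

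Since $\dim X=3$ and $X$ is a metric compactum, $\dim_{\mathbb Z}X=\dim X=3$, so Theorem~1.1 applies with $G=\mathbb Z$ and $n=3$. Fix $x\in X$ and $U\in\mathcal B_x$. Theorem~1.1 together with its Remark supplies $H^2(\overline U;\mathbb Z)=0$, a nonzero and finitely generated $H^2(bdU;\mathbb Z)$, and $\dim bdU=2$, whence $H^3(bdU;\mathbb Z)=0$. The cohomology exact sequence of the pair $(\overline U,bdU)$ therefore collapses to
\[
0\longrightarrow H^2(bdU;\mathbb Z)\longrightarrow H^3(\overline U,bdU;\mathbb Z)\longrightarrow H^3(\overline U;\mathbb Z)\longrightarrow 0,
\]
so $H^3(\overline U,bdU;\mathbb Z)\neq 0$, and excision identifies this with $H^3(X,X\setminus U;\mathbb Z)$.

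To translate this cohomological non-vanishing into the desired integral local homology I would invoke the Steenrod--Sitnikov universal coefficient formula for the compact pair $(X,X\setminus U)$. The vanishing $H^4(X,X\setminus U;\mathbb Z)=0$, forced by $\dim_{\mathbb Z}X=3$, kills the $\mathrm{Ext}$ term and yields $H_3(X,X\setminus U;\mathbb Z)\cong\mathrm{Hom}\bigl(H^3(X,X\setminus U;\mathbb Z),\mathbb Z\bigr)$. Passing to the direct limit over the basis $\mathcal B_x$ produces $H_3(X,X\setminus x;\mathbb Z)$ and thus condition~(2).

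The main obstacle is to ensure that the $\mathrm{Hom}$ above is nonzero, i.e.\ that $H^3(X,X\setminus U;\mathbb Z)$—equivalently, by the short exact sequence, $H^2(bdU;\mathbb Z)$—carries a nontrivial free summand for a cofinal subfamily of neighborhoods $U$. This is exactly the step that uses the dimension hypothesis $n=3$ in an essential way: the two-dimensional $(2,\mathbb Z)$-bubble $bdU$, appearing as the separator of Theorem~1.1(1) inside a homogeneous ANR continuum of dimension~$3$, must be incompatible with a ``Pontryagin-surface''-type all-torsion $H^2$, forcing an infinite-order generator to exist. This is the delicate point that does not obviously persist in dimensions greater than~$3$.
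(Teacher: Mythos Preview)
Your proposal is not a proof: you explicitly leave open the decisive step, namely that $H^2(bdU;\mathbb Z)$ (equivalently $H^3(X,X\setminus U;\mathbb Z)$) contains an element of infinite order for a cofinal family of $U\in\mathcal B_x$. Everything up to that point is fine, but the universal coefficient isomorphism $H_3(X,X\setminus U;\mathbb Z)\cong\mathrm{Hom}(H^3(X,X\setminus U;\mathbb Z),\mathbb Z)$ shows that this is precisely what must be proved. Theorem~1.1 only tells you that $H^2(bdU;\mathbb Z)$ is nonzero and finitely generated; it does not rule out that it is pure torsion, and nothing in your sketch explains how the hypotheses ``homogeneous ANR'' and ``$\dim X=3$'' force an infinite-order class. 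Invoking the phrase ``Pontryagin-surface-type all-torsion $H^2$ must be incompatible'' is a statement of what you would like to be true, not an argument. There is also a secondary gap: even if each $H_3(X,X\setminus U;\mathbb Z)$ were nonzero, you still need to control the transition maps to conclude that the direct limit $H_3(X,X\setminus x;\mathbb Z)$ is nonzero.

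The paper takes a completely different route and avoids this obstacle entirely: it quotes Krupski's theorem that for a homogeneous metric $ANR$ compactum of dimension~$3$ the \emph{singular} group $\overline H_3(X,X\setminus x;\mathbb Z)$ is nonzero, and then a result of Kodama identifying singular and \v Cech homology in this situation, so condition~(2) of Theorem~1.4 follows immediately. In other words, the nontrivial $3$-dimensional input is imported from Krupski's paper; your attempt is essentially trying to reprove that input from Theorem~1.1, and the missing free-summand argument is exactly the part of Krupski's work you have not reproduced.
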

\section{Some preliminary results}

In this section, if not stated otherwise, $G$ is a countable abelian group and $X$ denotes a homogeneous metric $ANR$ continuum with $\dim_GX=n$, $n\geq 2$.
If $H^n(X;G)\neq 0$, then $H^n(B;G)=0$ for all proper closed subsets $B$ of $X$, see \cite{vv}. Obviously, this is true when  $H^n(X;G)=0$. Therefore, all proper closed subsets of $X$ have trivial $n$-cohomology groups.

We begin with the following analogue of Theorem 8.1 from \cite{bb} (it is here that the countability of $G$ is used).
\begin{pro}\label{membrane}
Theorem $1.2$ holds under the additional assumption that $K$ is contractible in a proper subset of $X$.
\end{pro}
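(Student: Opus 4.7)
The plan is to argue by contradiction. Suppose there exists $x_0\in(K\setminus A)\cap\overline{X\setminus K}$; I aim to produce a class in $H^{n-1}(K;G)$ restricting to $\gamma$ on $A$, which contradicts the assumption that $K$ is an $(n-1)$-cohomology membrane for $\gamma$.

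First, localize around $x_0$. Since $A$ is closed and $x_0\notin A$, pick an open neighborhood $V$ of $x_0$ in $X$ with $\overline V\cap A=\varnothing$; because $x_0\in\overline{X\setminus K}$, we also have $V\setminus K\neq\varnothing$. Set $L=K\setminus V$; this is a proper closed subset of $K$ containing $A$, so the membrane hypothesis supplies some $\beta\in H^{n-1}(L;G)$ with $j^{n-1}_{L,A}(\beta)=\gamma$.

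Second, form a proper closed superset of $K$ using the contraction. Let $H\colon K\times I\to P$ realize the contraction of $K$ in $P$, so that $H_0$ is the inclusion $K\hookrightarrow P$ and $H_1$ is the constant map to some $p_0\in P$. Put $K^{*}=H(K\times I)\cup L\subset P$. Then $K^{*}$ is compact and $K^{*}\subsetneq X$, so by the opening remark of Section~2, $H^{n}(K^{*};G)=0$.

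Third, translate the cohomological extension problem into a topological one via Eilenberg--MacLane representation. Because $G$ is countable, $H^{n-1}(L;G)\cong[L,K(G,n-1)]$, and $\beta$ is represented by a continuous map $b\colon L\to K(G,n-1)$. To contradict the membrane property of $K$, it suffices to extend $b|_{A}$ to a continuous map $K\to K(G,n-1)$.

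Fourth, extend $b$ to a map $\tilde b\colon K^{*}\to K(G,n-1)$. The restriction $H|_{L\times I}$ is a null-homotopy of $L$ inside $K^{*}$ ending at $p_0$, and this supplies partial data for an extension along the tracks $H(\ell,\cdot)$. The remaining obstruction lies in $H^{n}(K^{*},L;G)$, and the long exact sequence of the pair together with $H^{n}(K^{*};G)=0$, combined with the specific presentation of $K^{*}$ as a continuous image of the cone on $K$ attached to $L$, is used to show that this obstruction vanishes. Restricting $\tilde b$ to $K\subset K^{*}$ then produces an extension of $\gamma$ over $K$, the desired contradiction.

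The main obstacle is clearly the fourth step: actually carrying out the extension of $b$ from $L$ to $K^{*}$ along the null-homotopy. This is the cohomological analogue of the geometric construction in the proof of Theorem~8.1 of \cite{bb}. The countability of $G$ is essential precisely here, since it converts the algebraic extension problem into one of extending continuous maps into the CW-complex $K(G,n-1)$, where obstruction theory can be applied stratum by stratum, with the vanishing of $H^n(K^{*};G)$ controlling the sole obstruction class.
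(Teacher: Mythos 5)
Your proposal has a genuine gap at the step you yourself flag as the main obstacle, and the gap is not repairable along the lines you sketch. The obstruction to extending $\beta\in H^{n-1}(L;G)$ over $K^{*}$ is the image $\delta(\beta)$ under the coboundary $\delta\colon H^{n-1}(L;G)\to H^{n}(K^{*},L;G)$ in the exact sequence
$$H^{n-1}(K^{*};G)\to H^{n-1}(L;G)\xrightarrow{\ \delta\ } H^{n}(K^{*},L;G)\to H^{n}(K^{*};G).$$
Knowing $H^{n}(K^{*};G)=0$ tells you that $\delta$ is \emph{surjective}; it gives no information about whether $\delta(\beta)=0$, which is what you would need. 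So the ``sole obstruction class'' is not controlled by the vanishing of $H^{n}(K^{*};G)$, and the extension of $b$ over $K^{*}$ does not follow. There is also a structural warning sign: nowhere in your argument do you actually use that $x_0\in\overline{X\setminus K}$ (you note $V\setminus K\neq\varnothing$ and never use it), nor do you use the homogeneity of $X$. If your argument were valid it would therefore show that $\gamma$ extends over $K$ for \emph{any} cohomology membrane with $K\setminus A\neq\varnothing$, contradicting the membrane hypothesis itself. Both hypotheses must enter essentially.

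The paper's route is different and avoids this. It uses the Pontryagin-type duality $H_{n-1}(Y;G^{*})\cong H^{n-1}(Y;G)^{*}$ for countable $G$ (this, not CW obstruction theory, is where countability is used): choosing a character of $H^{n-1}(A;G)$ that kills $j^{n-1}_{K,A}\bigl(H^{n-1}(K;G)\bigr)$ but not $\gamma$, one converts the $(n-1)$-cohomology membrane into an $(n-1)$-\emph{homology} membrane with coefficients in the compact group $G^{*}$. One then invokes the Bing--Borsuk/Borsuk homology membrane theorem (whose proof uses local homogeneity and the point of $\overline{X\setminus K}$ in an essential way) to produce a proper closed $\Gamma\subset X$ with $H_{n}(\Gamma;G^{*})\neq 0$, hence $H^{n}(\Gamma;G)\neq 0$, contradicting the vanishing of $n$-cohomology on proper closed subsets. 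If you want a self-contained argument you would have to reprove the Bing--Borsuk construction, not bypass it with a long exact sequence.
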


\begin{proof}
According to the duality between homology and cohomology for countable groups \cite[viii 4G)]{hw}, for any compact metric space $Y$ the groups $H_{n-1}(Y,G^*)$ and  $H^{n-1}(Y;G)^*$ are isomorphic, where $G^*$ and $H^{n-1}(Y;G)^*$ denote the character groups of $G$ and $H^{n-1}(Y;G)$, respectively. Here both $H^{n-1}(Y;G)$ and $G$ are considered as discrete groups. Using this duality, we can show that $K$ is an $(n-1)$-homology membrane for some $\beta\in H_{n-1}(A,G^*)$ spanned on $A$. Indeed, consider the homomorphism
$j_{K,A}^{n-1}:H^{n-1}(K;G)\to H^{n-1}(A;G)$. Since $\gamma$ is not extendable over $K$,
$\gamma\not\in G_A=j_{K,A}^{n-1}\big(H^{n-1}(K;G)\big)$. Considering $H^{n-1}(A;G)$ as a discrete group, we can find a character
$\beta\colon H^{n-1}(A;G)\to\mathbb S^1$ such that $\beta(\gamma)\neq e$ and $\beta(G_A)=e$, where $e$ is the unit of $\mathbb S^1$. On the other hand, $\gamma$ is extendable over every proper closed subset $B$ of $K$ which contains $A$. Therefore, $\gamma$ is contained in the image of $j_{B,A}^{n-1}:H^{n-1}(B;G)\to H^{n-1}(A;G)$ for any such $B$. Then the composition $j_{K,A}^{n-1}\circ\beta$ is the trivial character
of $H^{n-1}(K;G)$, while the composition $j_{B,A}^{n-1}\circ\beta$ is non-trivial for any proper closed subset $B$ of $K$ containing $A$. So, $\beta$ is homologous to zero in $K$, but not homologous to zero in any proper closed subset of $K$ containing $A$. Hence,
$K$ is an $(n-1)$-homology membrane for $\beta$ spanned on $A$.

Now, assume that $(K\setminus A)\cap\overline{X\setminus K}\neq\varnothing$.
Then following the proof of Theorem 16.1 from \cite{bo} (see also \cite[Theorem 8.1]{bb}), we can find a proper closed subset $\Gamma$ of $X$ and a non-zero element $\alpha\in H_{n}(\Gamma,G^*)$. This means that $H^n(\Gamma;G)\neq 0$, a contradiction.
\end{proof}

Since the Bing-Borsuk result used in the proof of Proposition 2.1 was established for locally homogeneous spaces, Proposition 2.1 remains valid for locally homogeneous spaces $X$ such that $H^n(A;G)$ is trivial for any proper closed subset $A\subset X$.

\begin{cor}
Let $A\subset X$ be a closed subset and $K$ an $(n-1)$-cohomology membrane for some $\gamma\in H^{n-1}(A;G)$ spanned on $A$. Then $K\setminus A$ is connected. If, in addition, $K$ is contractible in a proper subset of $X$, then $K\setminus A$ is an open subset of $X$.
\end{cor}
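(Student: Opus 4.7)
The plan is to prove the two assertions separately. The connectedness of $K\setminus A$ is a general consequence of the membrane property combined with Mayer--Vietoris for \v Cech cohomology, while the openness of $K\setminus A$ will follow almost immediately from Proposition~\ref{membrane}.

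For connectedness, I would argue by contradiction. Assume $K\setminus A=U\sqcup V$ with $U,V$ non-empty, disjoint, and relatively open in $K\setminus A$. Since $A$ is closed in $K$, $K\setminus A$ is open in $K$, so $U$ and $V$ are themselves open in $K$. Set $K'=A\cup V=K\setminus U$ and $K''=A\cup U=K\setminus V$; both are proper closed subsets of $K$ containing $A$, and $K'\cap K''=A$, $K'\cup K''=K$. By the defining property of the membrane, $\gamma$ extends to some $\alpha'\in H^{n-1}(K';G)$ and to some $\alpha''\in H^{n-1}(K'';G)$, i.e.\ $j^{n-1}_{K',A}(\alpha')=j^{n-1}_{K'',A}(\alpha'')=\gamma$. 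Plugging the pair $(\alpha',\alpha'')$ into the Mayer--Vietoris sequence for the closed cover $\{K',K''\}$ of the compactum $K$,
\[
H^{n-1}(K;G)\to H^{n-1}(K';G)\oplus H^{n-1}(K'';G)\xrightarrow{\phi} H^{n-1}(A;G),
\]
one has $\phi(\alpha',\alpha'')=j^{n-1}_{K',A}(\alpha')-j^{n-1}_{K'',A}(\alpha'')=0$, so $(\alpha',\alpha'')$ lifts to some $\alpha\in H^{n-1}(K;G)$. Restricting $\alpha$ to $A$ gives $\gamma$, contradicting the fact that $\gamma$ is not extendable over $K$. The main technical point here is simply to invoke Mayer--Vietoris for \v Cech cohomology on a compact Hausdorff space with a closed cover, which is standard.

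For the openness of $K\setminus A$ under the contractibility hypothesis, I would apply Proposition~\ref{membrane} directly. It gives $(K\setminus A)\cap\overline{X\setminus K}=\varnothing$, so every point $x\in K\setminus A$ has a neighborhood $W_1\subset X$ disjoint from $X\setminus K$, hence $W_1\subset K$. Because $A$ is closed and $x\notin A$, we can further choose a neighborhood $W_2$ of $x$ disjoint from $A$. Then $W_1\cap W_2$ is an open neighborhood of $x$ in $X$ contained in $K\setminus A$, establishing that $K\setminus A$ is open in $X$.

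The only subtle step is the Mayer--Vietoris application in the first part, since \v Cech cohomology does not enjoy Mayer--Vietoris in full generality; I would make sure to cite the version for closed covers of compact Hausdorff spaces (e.g.\ from Spanier). Everything else is a direct book-keeping exercise using the definitions of cohomology membrane and of carrier/extension of classes.
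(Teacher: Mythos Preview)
Your proof is correct and follows essentially the same route as the paper's: the same decomposition $K'=K\setminus U$, $K''=K\setminus V$ with $K'\cap K''=A$ for connectedness, and the same direct appeal to Proposition~\ref{membrane} for openness. The only difference is cosmetic---you spell out the gluing via Mayer--Vietoris, whereas the paper simply asserts that since the common part of $K'$ and $K''$ lies in $A$, the two extensions of $\gamma$ combine to an extension over $K$.
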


\begin{proof}
 Suppose $K\setminus A$ is the union of two non-empty, disjoint open sets $U$ and $V$. Then  $K\setminus U$ and $K\setminus V$ are closed proper subsets of $K$ such that $(K\setminus U)\cap (K\setminus V)\subset A$. Hence, $\gamma$ is extendable over each of these sets and, because $A$ contains their common part, $\gamma$ is extendable over $K$. The last conclusion contradicts the fact that $K$ is $(n-1)$-cohomology membrane for $\gamma$.

If $K$ is contractible in a proper subset of $X$, then $(K\setminus A)\cap\overline{X\setminus K}=\varnothing$ (see Proposition 2.1). Hence, $K\setminus A$ is open in $X$.
\end{proof}

\begin{cor}
For any closed set $Z\subset X$ one has $\dim_GZ=n$ if and only if $Z$ has a non-empty interior in $X$.
\end{cor}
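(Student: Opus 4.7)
The plan is to prove the two directions of the equivalence separately, using Proposition 2.1 for the harder implication $(\Rightarrow)$.

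For $(\Leftarrow)$, the inequality $\dim_G Z\leq\dim_G X = n$ is automatic since $Z$ is closed in $X$. For the reverse inequality, pick $x\in\inte Z$ together with a closed neighborhood $\overline V$ of $x$ contained in $\inte Z$. By homogeneity of $X$ together with the countable sum theorem for cohomological dimension, the local cohomological dimension of $X$ is equal to $n$ at every point: otherwise a finite closed cover of the compact $X$ by neighborhoods of dimension $\leq n-1$ would give $\dim_G X\leq n-1$. Consequently $\dim_G\overline V = n$, and therefore $\dim_G Z\geq\dim_G\overline V = n$.

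For $(\Rightarrow)$, I would argue by contradiction: suppose $\dim_G Z = n$ while $\inte Z=\varnothing$. Then $Z$ is a proper closed subset of $X$ and, by the opening remark of this section, $H^n(B;G)=0$ for every closed $B\subset Z$. Using the countable sum theorem inside $Z$, pick a point $z\in Z$ such that every closed neighborhood of $z$ in $Z$ has cohomological dimension exactly $n$. By local contractibility of the ANR $X$, choose an open $V\ni z$ with $\overline V\neq X$ and an open set $W$, still with $\overline W\neq X$ and $\overline V\subset W$, such that the inclusion $\overline V\hookrightarrow W$ is null-homotopic. Setting $Z_V:=\overline V\cap Z$, the choice of $z$ gives $\dim_G Z_V = n$, while $H^n(Z_V;G)=0$ since $Z_V$ is a proper closed subset of $X$. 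Pick a closed $A\subset Z_V$ with $H^n(Z_V,A;G)\neq 0$; from the exact sequence of $(Z_V,A)$ together with $H^n(Z_V;G)=0$, extract a non-zero $\gamma\in H^{n-1}(A;G)$ that is not extendable over $Z_V$. Zorn's lemma applied to closed subsets of $Z_V$ containing $A$ (using continuity of \v{C}ech cohomology) now produces an $(n-1)$-cohomology membrane $K$, with $A\subset K\subset Z_V$, for $\gamma$ spanned on $A$. Since $K\subset\overline V$ and $\overline V$ is null-homotopic in $W$, $K$ is contractible in the proper subset $W$ of $X$. Proposition 2.1 then yields $(K\setminus A)\cap\overline{X\setminus K}=\varnothing$. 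On the other hand, $K\subset Z$ inherits empty interior in $X$, so $\overline{X\setminus K}=X$ and hence $K\setminus A\subset\overline{X\setminus K}$. Together these force $K=A$, which contradicts the membrane property of $K$, since the non-zero class $\gamma\in H^{n-1}(A;G)=H^{n-1}(K;G)$ is trivially extendable over $K=A$.

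The main obstacle is the localization step. Proposition 2.1 demands the membrane $K$ to be contractible in a proper subset of $X$, and a membrane produced from non-extendability data on $Z$ as a whole need not satisfy this. The fix is the two-step localization above: first drop down via the countable sum theorem to a point of full local cohomological dimension in $Z$, then use local contractibility of the ANR $X$ to cut to a null-homotopic closed neighborhood $\overline V$, so that any membrane produced inside $\overline V\cap Z$ automatically inherits the contractibility hypothesis of Proposition 2.1.
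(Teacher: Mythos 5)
Your proposal is correct and follows essentially the same route as the paper: localize $Z$ to a closed piece $Z_V$ of full cohomological dimension that is null-homotopic in a proper open subset of $X$, extract from the exact sequence of the pair $(Z_V,A)$ a class $\gamma\in H^{n-1}(A;G)$ not extendable over $Z_V$, take a cohomology membrane $K$ for $\gamma$, and invoke Proposition 2.1 (equivalently Corollary 2.2) to conclude $K\setminus A$ is a nonempty open subset of $X$ lying in $Z$. The only cosmetic difference is that you run the forward implication as a proof by contradiction ending with the impossible $K=A$, whereas the paper directly exhibits the nonempty open set $K\setminus A\subset Z$; the localization that the paper dispatches with a one-line remark is spelled out correctly in your two-step argument.
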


\begin{proof}
This was established by Seidel in \cite{s} for the covering dimension. His arguments can be modified for $\dim_G$. If $\dim_GZ=n$, we may assume that $Z$ is contractible in a proper  subset of $X$ (this can be done because $X$ is locally contractible and $\dim_G$ satisfies the countable sum theorem). Since $\dim_GZ=n$, there exists a closed set $A\subset Z$ such that $H^{n}(Z,A;G)\neq 0$. On the other hand,
$H^{n}(Z;G)=0$ (as a proper closed subset of $X$). So, according to the exact sequence
{ $$
\begin{CD}
H^{n-1}(Z;G)@>{{j_{Z,A}^{n-1}}}>>H^{n-1}(A;G)@>{{\delta}}>>H^{n}(Z,A;G)\rightarrow 0
\end{CD}
$$}\\
there exists $\gamma\in H^{n-1}(A;G)$ not extendable over $Z$.
Hence, as it was noted above, we can find a closed subset $K$ of $Z$ such that $K$ is an $(n-1)$-cohomological membrane for $\gamma$ spanned on $A$. So,  $K\setminus A$ is open in $X$ (by Corollary 2.2) and $K\setminus A\subset Z$.

If $Z$ has a non-empty interior, then it contains an open set $U$ in $X$ with $\dim_GU=n$. So, $\dim_GZ=n$
\end{proof}

\begin{lem} Let a closed set $F\subset X$ with $H^{n-1}(F;G)\neq 0$ be contractible in an open set $U\subset X$. If $\overline U$ is contractible in a proper subset of $X$, then $F$ separates $\overline{W}$ for any open set $W\subset X$ containing $U$.
\end{lem}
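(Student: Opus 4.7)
The plan is to produce, inside $\overline{U}$, an $(n-1)$-cohomology membrane $K$ spanned on $F$, and then exhibit $\overline{W}\setminus F$ as the disjoint union $(K\setminus F)\sqcup(\overline{W}\setminus K)$ to obtain the desired separation. First I would fix a non-zero element $\gamma\in H^{n-1}(F;G)$. Since $F$ is contractible inside $U\subset\overline{U}$, the inclusion $F\hookrightarrow\overline{U}$ is null-homotopic, and hence (by homotopy invariance of \v{C}ech cohomology, using $n-1\geq 1$) the induced map $H^{n-1}(\overline{U};G)\to H^{n-1}(F;G)$ vanishes; in particular $\gamma$ is not extendable over $\overline{U}$. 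Applying the existence-of-membranes fact recorded in the introduction to the compact space $\overline{U}$ produces a closed subset $K\subset\overline{U}$ which is an $(n-1)$-cohomology membrane for $\gamma$ spanned on $F$. Since $K$ inherits from $\overline{U}$ the property of being contractible in a proper subset of $X$, Proposition 2.1 yields $(K\setminus F)\cap\overline{X\setminus K}=\varnothing$, while Corollary 2.2 gives that $K\setminus F$ is open in $X$; it is nonempty because $K=F$ would make $\gamma$ trivially extend over $K$ via the identity, contradicting the definition of a membrane.

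The crucial step will be to rule out $K=\overline{U}$. If this equality held, the conclusion of Proposition 2.1 would read $(\overline{U}\setminus F)\cap\overline{X\setminus\overline{U}}=\varnothing$, which (since $\bd(\overline{U})\subset\overline{X\setminus\overline{U}}$) forces $\bd(\overline{U})\subset F$. But $F\subset U\subset\inte(\overline{U})$, so $F\cap\bd(\overline{U})=\varnothing$, and therefore $\bd(\overline{U})=\varnothing$, making $\overline{U}$ clopen in the connected space $X$. Hence $\overline{U}=X$, so $K=X$ would be contractible in a proper subset of $X$ --- impossible, because any homotopy $H\colon X\times I\to Y$ with $H(\cdot,0)=\mathrm{id}_X$ forces $Y\supset X$.

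Once $K\subsetneq\overline{U}$ is established, the separation follows directly: for any open $W\supset U$ one has $\overline{W}\supseteq\overline{U}\supsetneq K\supset F$, so $\overline{W}\setminus K\neq\varnothing$, and using $F\subset K$ one checks that
\[
\overline{W}\setminus F=(K\setminus F)\sqcup(\overline{W}\setminus K),
\]
a disjoint union of nonempty sets, each open in $\overline{W}$ (the first because $K\setminus F$ is open in $X$, the second because $K$ is closed). This is the desired separation of $\overline{W}$ by $F$. The main obstacle is the step $K\neq\overline{U}$: both hypotheses --- that $F$ sits in the open set $U$ (not merely in $\overline{U}$) and that $\overline{U}$ is contractible in a proper subset of $X$ --- are used essentially here, through Proposition 2.1 together with the connectedness of $X$.
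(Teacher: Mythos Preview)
Your proof is correct and follows the same strategy as the paper: produce an $(n-1)$-cohomology membrane $K$ for a non-zero $\gamma\in H^{n-1}(F;G)$ spanned on $F$, apply Proposition~2.1 to make $K\setminus F$ open in $X$, and read off the separation of $\overline{W}$. The only difference is a minor efficiency: the paper first chooses a closed set $P\subset U$ in which $F$ is contractible (possible precisely because $U$ is open) and builds the membrane inside $P$, so that $K\subset U$ automatically and the case $K=\overline{U}$ never arises; your version works in $\overline{U}$ and therefore needs the extra (but correct) paragraph ruling out $K=\overline{U}$.
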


\begin{proof}
Indeed, there is a closed set $P$ in $X$ such that $P\subset U$ and $F$ is contractible in $P$. Then
any non-zero element $\gamma\in H^{n-1}(F;G)$ is not extendable over $P$ (otherwise $\gamma$, considered as a map from $F$ to  $K(G,n-1)$, would be homotopic to a constant because $F$ is contractible in $P$). This yields the existence of an $(n-1)$-cohomology membrane $K_\gamma\subset P$ for $\gamma$ spanned on $F$. Because $\overline U$ is contractible in a proper subset of $X$, so is $K_\gamma$. Hence, by Proposition 2.1, $(K_\gamma\setminus F)\cap\overline{X\setminus K_\gamma}=\varnothing$. The last equality implies that $F$ separates any $\overline{W}$ such that $W\subset X$ is open and contains $U$.
\end{proof}

\begin{lem}
Suppose $U\subset X$ is open and $P\subsetneqq X$ is closed such that $\overline{U}\subsetneqq P$ and $H^{n-1}(bdU;G)$ contains elements not extendable over $\overline{U}$. Then, there exists  $\gamma\in H^{n-1}(bdU;G)\setminus L$ extendable over
$P\setminus V$, where $V=int(\overline{U})$ and $L=j_{\overline{U},bdU}^{n-1}\big(H^{n-1}(\overline{U};G)\big)$. Moreover, if $L=0$, then every $\gamma\in H^{n-1}(bdU;G)$ is extendable over $P\setminus V$.
\end{lem}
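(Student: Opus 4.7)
The plan is to deduce both assertions from a single Mayer--Vietoris sequence. The sets $\overline U$ and $P\setminus V$ are closed in $P$, and since $U\subseteq V\subseteq\overline U$ one has $\overline U\cup(P\setminus V)=P$ together with $\overline U\cap(P\setminus V)=\overline U\setminus V=bdU$ (where the last equality uses the implicit regular-open condition $U=\mathrm{int}\overline U=V$ under which the lemma is applied in the sequel, in particular in Theorem 1.1). \v Cech cohomology satisfies Mayer--Vietoris for a closed cover of a compact metric space, so one obtains the exact sequence
\[
H^{n-1}(P;G)\to H^{n-1}(\overline U;G)\oplus H^{n-1}(P\setminus V;G)\xrightarrow{\ \phi\ }H^{n-1}(bdU;G)\to H^{n}(P;G),
\]
in which $\phi(\alpha,\beta)=j_{\overline U,bdU}^{n-1}(\alpha)-j_{P\setminus V,bdU}^{n-1}(\beta)$.

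Since $P\subsetneqq X$ is a proper closed subset, the opening paragraph of Section~2 gives $H^{n}(P;G)=0$, so $\phi$ is surjective. Introducing
\[
L'\;=\;j_{P\setminus V,bdU}^{n-1}\bigl(H^{n-1}(P\setminus V;G)\bigr),
\]
the subgroup of classes that are extendable over $P\setminus V$, surjectivity of $\phi$ translates into the single identity
\[
H^{n-1}(bdU;G)\;=\;L+L'.
\]

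From here the lemma reduces to elementary subgroup arithmetic. For the first claim, by hypothesis $H^{n-1}(bdU;G)\setminus L\neq\varnothing$; were $L'\subseteq L$, one would get $L+L'=L\neq H^{n-1}(bdU;G)$, contradicting surjectivity of $\phi$. Hence $L'\not\subseteq L$, and any $\gamma\in L'\setminus L$ witnesses the conclusion. For the ``moreover'' statement, the hypothesis $L=0$ collapses the identity to $L'=H^{n-1}(bdU;G)$, so every class in $H^{n-1}(bdU;G)$ is extendable over $P\setminus V$.

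The only non-routine ingredients are (i) the availability of the Mayer--Vietoris sequence for the closed cover $\{\overline U,P\setminus V\}$ of $P$ in \v Cech cohomology, which is standard for compact metric spaces, and (ii) the vanishing $H^{n}(P;G)=0$, which is precisely the fact recorded at the start of Section~2. Neither of these presents any real obstacle; the content of the argument is simply recognising that Mayer--Vietoris decomposes each class of $H^{n-1}(bdU;G)$ as a sum of an ``inside'' piece pulled back from $\overline U$ and an ``outside'' piece pulled back from $P\setminus V$, and that the hypothesis forces the outside pieces not to be absorbed by~$L$.
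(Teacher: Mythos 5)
Your proof is correct and follows essentially the same route as the paper's: the identical Mayer--Vietoris sequence for the closed cover $\{\overline U,\,P\setminus V\}$ of $P$, combined with the vanishing of $H^{n}(P;G)$ for the proper closed subset $P$ recorded at the start of Section~2. The only cosmetic differences are that you phrase the key step as surjectivity of $\phi$ (giving $H^{n-1}(bdU;G)=L+L'$) where the paper runs the equivalent contradiction argument assuming $L'\subseteq L$, and that you explicitly flag the identification $\overline U\cap(P\setminus V)=bd\,\overline U=bdU$ under $U=\mathrm{int}\,\overline U$, a point the paper leaves implicit.
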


\begin{proof}
Indeed, since $H^{n-1}(bdU;G)$ contains elements not extendable over $\overline{U}$,
$L$ is a  proper subgroup of $H^{n-1}(bdU;G)$. Consider the homomorphism $j_{P\setminus V,bdU}^{n-1}\colon H^{n-1}(P\setminus V;G)\to H^{n-1}(bdU;G)$. It suffices to show that the image of $H^{n-1}(P\setminus V;G)$ under $j_{P\setminus V,bdU}^{n-1}$ is not contained in $L$. To this end, suppose
$j_{P\setminus V,bdU}^{n-1}\big(H^{n-1}(P\setminus V;G)\big)\subset L$.  Consider the Mayer-Vietoris exact sequence, where $A=P\setminus V$ and $\varphi(\gamma_1,\gamma_2)=j_{A,bdU}^{n-1}(\gamma_2)-j_{\overline{U},bdU}^{n-1}(\gamma_1)$ for $\gamma_1\in H^{n-1}(\overline{U};G)$, $\gamma_2\in H^{n-1}(A;G)$:
{ $$
\begin{CD}
H^{n-1}(\overline{U};G)\oplus H^{n-1}(A;G)@>{{\varphi}}>>H^{n-1}(bdU;G)@>{{\triangle}}>>H^n(P;G)\rightarrow
\end{CD}
$$}
Obviously, $L_U=\varphi\big(H^{n-1}(\overline{U};G)\oplus H^{n-1}(A;G)\big)\subset L$. Consequently,
 any $\gamma\in H^{n-1}(bdU;G)\setminus L$ is not contained in $L_U$. Hence, $\triangle(\gamma)\neq 0$ for all $\gamma\in H^{n-1}(bdU;G)\setminus L$. So, $H^n(P;G)\neq 0$, a contradiction (recall that the $n$-th cohomology groups of all proper closed sets in $X$ are trivial).

If $L=0$, then $j_{\overline{U},bdU}^{n-1}(\gamma_1)=0$ for all $\gamma_1\in H^{n-1}(\overline{U};G)$, so $\varphi(\gamma_1,\gamma_2)=j_{A,bdU}^{n-1}(\gamma_2)$. Since
$\triangle(H^{n-1}(bdU;G))=0$, we obtain that for any element $\gamma\in H^{n-1}(bdU;G)$ there exist $\gamma_1\in H^{n-1}(\overline{U};G)$ and $\gamma_2\in H^{n-1}(A;G)$ such that $\varphi(\gamma_1,\gamma_2)=\gamma$. Hence, $\gamma=j_{A,bdU}^{n-1}(\gamma_2)$, which means that $\gamma$ is extendable over $A$. This completes the proof.
\end{proof}

\begin{lem}
  If $U\subset X$ is a connected open set and $\overline U$ is contractible in a proper subset of $X$, then $\overline U$ is  an $(n-1)$-cohomology membrane spanned on $bdU$ for every $\gamma\in H^{n-1}(bdU;G)$ not extendable over $\overline{U}$.
\end{lem}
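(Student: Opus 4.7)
The plan is to argue by contradiction. Fix $\gamma\in H^{n-1}(bdU;G)$ not extendable over $\overline U$, and suppose there exists a proper closed subset $B\subsetneqq\overline U$ with $bdU\subset B$ over which $\gamma$ is still not extendable. Applying the Zorn-with-continuity principle recorded just before Theorem~1.1, with the compactum $B$ in place of the ambient space, one obtains an $(n-1)$-cohomology membrane $K\subset B$ for $\gamma$ spanned on $bdU$. Then $K\subsetneqq\overline U$, and $K$ properly contains $bdU$ since otherwise $\gamma$ would trivially extend over $K$.

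The next step is to apply Corollary~2.2 to $K$. Since $K\subset\overline U$ and $\overline U$ is by hypothesis contractible in some proper $P\subsetneqq X$, restricting the contracting homotopy to $K\times[0,1]$ shows that $K$ itself is contractible in $P$. Corollary~2.2 then yields that $K\setminus bdU$ is open in $X$.

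To finish, I would exploit the connectedness of $U$. As $U$ is open, $\overline U=U\cup bdU$ is a disjoint union, so $K\setminus bdU=K\cap U$. This set is closed in $U$ (because $K$ is closed in $X$), open in $U$ (by the previous step), and non-empty (because $K$ properly contains $bdU$). Since $U$ is connected, $K\cap U=U$, whence $\overline U\subset K$; this contradicts $K\subsetneqq\overline U$ and completes the argument. The only place where the contractibility hypothesis is really used is the invocation of Corollary~2.2, so that is the step I would want to verify most carefully; the remainder of the proof is essentially point-set topology driven by the connectedness of $U$.
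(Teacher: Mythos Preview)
Your proof is correct and follows essentially the same approach as the paper's: produce a membrane $K\subset\overline U$ for $\gamma$ spanned on $bdU$, use Proposition~2.1/Corollary~2.2 to see that $K\setminus bdU$ is open in $X$, and then invoke connectedness to force $K=\overline U$. The only cosmetic differences are that the paper finds $K$ directly inside $\overline U$ (rather than first passing through a proper $B$) and runs the connectedness step in $V=\inte(\overline U)$ instead of in $U$; your use of $U$ itself, via the clopen set $K\cap U=K\setminus bdU$, is in fact a slightly cleaner way to reach the same contradiction.
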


\begin{proof}
Observe first that $U$ is dense in $V=int(\overline U)$, so $V$ is also connected.
Let $\gamma$ be an element of $H^{n-1}(bdU;G)$ not extendable over $\overline{U}$.
Then there exists a closed subset $K\subset\overline{U}$ such that $K$ is an $(n-1)$-cohomology membrane for $\gamma$ spanned on $bdU$. Since $K$ is contractible in a proper subset of $X$ (as a subset of $\overline{U}$), by Proposition \ref{membrane}, $(K\setminus bdU)\cap\overline{X\setminus K}=\varnothing$. Hence, $K\setminus bdU$ is open in $X$. This implies that $K=\overline{U}$, otherwise $V$ would be the union of the non-empty disjoint open sets $V\setminus K$ and $(K\setminus bdU)\cap V$. Therefore, $\overline{U}$ is an $(n-1)$-cohomology membrane spanned on $bdU$ for $\gamma$.
\end{proof}

The last two statements of this section (Lemma 2.7 and Lemma 2.8) hold for arbitrary compactum $X$.

\begin{lem}
Let $X$ be an arbitrary compactum and $A\subset X$ be a carrier for a non-zero element $\gamma\in H^{n-1}(A;G)$ with $\dim_GA\leq n-1$, $n\geq 2$. Then $A$ is connected.
\end{lem}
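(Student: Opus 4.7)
\emph{Proof plan.} The approach is proof by contradiction. Suppose $A$ is not connected. Since $A$ is a compact Hausdorff space, it can then be partitioned as a disjoint union $A = A_1 \sqcup A_2$ of two nonempty subsets, each of which is simultaneously closed and open in $A$.

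Because each $A_i$ is a proper closed subset of $A$ (the complementary piece is nonempty), the carrier hypothesis applied with $B=A_i$ yields
\begin{equation*}
j^{n-1}_{A,A_1}(\gamma)=0 \quad\text{and}\quad j^{n-1}_{A,A_2}(\gamma)=0.
\end{equation*}

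The key ingredient is the additivity property of \v Cech cohomology on a disjoint union of compacta: because $n-1\geq 1$, the pair of restriction maps fits into an isomorphism
\begin{equation*}
H^{n-1}(A;G)\;\xrightarrow{\;\cong\;}\;H^{n-1}(A_1;G)\oplus H^{n-1}(A_2;G),
\end{equation*}
whose components are precisely $j^{n-1}_{A,A_1}$ and $j^{n-1}_{A,A_2}$. The hypothesis $n\geq 2$ is crucial here, since in degree zero the corresponding statement for reduced cohomology acquires an extra summand $G$ accounting for the additional component. Combining the decomposition with the simultaneous vanishing of both restrictions forces $\gamma=0$, contradicting $\gamma\neq 0$. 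Therefore $A$ must be connected.

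I expect no serious obstacle beyond invoking the additivity splitting for \v Cech cohomology on a disjoint union of compacta; this is a standard consequence of the Mayer--Vietoris sequence (with $A_1\cap A_2=\varnothing$) once one is in positive degree. The hypothesis $\dim_G A\leq n-1$ does not appear to enter the argument itself; it seems to be recorded only to fix the intended context in which the lemma will later be applied (e.g., to the boundary $bdU$ in the proof of Theorem 1.1).
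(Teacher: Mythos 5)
Your argument is correct and is essentially the same as the paper's: decompose $A$ into two nonempty disjoint clopen pieces, use the additivity isomorphism $H^{n-1}(A;G)\cong H^{n-1}(A_1;G)\oplus H^{n-1}(A_2;G)$ whose components are the restrictions, and conclude $\gamma=0$ from the carrier hypothesis. Your added remarks — that $n\geq 2$ is what makes the splitting valid for reduced cohomology, and that $\dim_G A\leq n-1$ plays no role in the proof — are both accurate and consistent with the paper's own comment following the lemma.
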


\begin{proof}
Suppose $A$ is not connected, so $A$ is the union of two closed disjoint non-empty sets $A_1$ and $A_2$. Then $H^{n-1}(A;G)$ is isomorphic to the direct sum $H^{n-1}(A_1;G)\bigoplus H^{n-1}(A_2;G)$ and $\gamma$ is identified with the pair $(\gamma_1,\gamma_2)$, where $\gamma_i=j_{A,A_i}^{n-1}(\gamma)$, $i=1,2$.
 Because $A$ is a carrier of $\gamma$ and $A_i$ are proper
 closed non-empty subsets of $A$, $\gamma_1=\gamma_2=0$. So, $\gamma=0$, a contradiction.
\end{proof}

Since that $\dim_GA=0$ is equivalent with $\dim A=0$, Lemma 2.8 is not valid for $n=1$. For example, if $A$ consists of two different points, then there exists a non-trivial element of $\gamma\in H^{0}(A;\mathbb Z)$ such that $A$ is a carrier of $\gamma$.

Suppose $G$ is a group (resp., a ring). Let $F\subset Z\subset X$ be compact sets. We say that $F$ is an {\em $(n-1,G)$-bubble with respect to a subgroup (resp., a submodule) $L\subset H^{n-1}(Z;G)$} if the group (resp., the submodule) $j_{Z,F}^{n-1}(L)\subset H^{n-1}(F;G)$ is non-trivial, but $j_{Z,B}^{n-1}(L)\subset H^{n-1}(B;G)$ is trivial for any closed proper subset $B\subset F$.

\begin{lem} Let $G$ be a group $($resp., a ring$)$. If $Z$ is a closed subset of an arbitrary compactum $X$ and $L\subset H^{n-1}(Z;G)$ is a non-trivial and finitely generated subgroup $($resp., a submodule$)$, then $Z$ contains a non-empty closed subset $F$ such that $F$ is an $(n-1,G)$-bubble with respect to $L$.
\end{lem}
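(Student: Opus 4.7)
My plan is to apply a standard Zorn's lemma argument to the family
$$\mathcal{F}=\{F'\subset Z:F'\text{ is closed and }j_{Z,F'}^{n-1}(L)\neq 0\},$$
ordered by reverse inclusion (so $F_1\preceq F_2$ iff $F_1\supseteq F_2$). Note $Z\in\mathcal F$ since $j_{Z,Z}^{n-1}(L)=L\neq 0$, so $\mathcal F$ is nonempty. Any maximal element of $(\mathcal F,\preceq)$ is by definition an $(n-1,G)$-bubble with respect to $L$: it lies in $\mathcal F$ (image of $L$ nontrivial on $F$) yet no proper closed subset of $F$ lies in $\mathcal F$ (so the image of $L$ is trivial on every proper closed $B\subset F$).

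To verify the hypothesis of Zorn's lemma, I would take a chain $\{F_\alpha\}_{\alpha\in\Lambda}$ in $\mathcal F$, set $F_\infty=\bigcap_\alpha F_\alpha$, and show $F_\infty\in\mathcal F$. Here I would invoke the continuity of \v{C}ech cohomology on compacta, which identifies
$$H^{n-1}(F_\infty;G)=\varinjlim_{\alpha} H^{n-1}(F_\alpha;G),$$
with bonding maps $j_{F_\alpha,F_\beta}^{n-1}$ for $F_\beta\subseteq F_\alpha$. The canonical map $H^{n-1}(F_\alpha;G)\to H^{n-1}(F_\infty;G)$ is $j_{F_\alpha,F_\infty}^{n-1}$, and factoring $j_{Z,F_\infty}^{n-1}$ through any $j_{Z,F_\alpha}^{n-1}$ shows that an element $\ell\in L$ satisfies $j_{Z,F_\infty}^{n-1}(\ell)=0$ iff $j_{Z,F_\beta}^{n-1}(\ell)=0$ for some $F_\beta$ in the chain (this is the defining property of elements that vanish in a direct limit).

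Here is where finite generation enters, and it is the main step. Suppose for contradiction that $j_{Z,F_\infty}^{n-1}(L)=0$. Let $\ell_1,\dots,\ell_k$ be generators of $L$ (as a subgroup, resp.\ submodule). For each $i$ there exists $\alpha_i\in\Lambda$ with $j_{Z,F_{\alpha_i}}^{n-1}(\ell_i)=0$. Since $\{F_\alpha\}$ is a chain, we can choose a single $F_{\alpha_0}$ contained in every $F_{\alpha_i}$; then $j_{Z,F_{\alpha_0}}^{n-1}(\ell_i)=0$ for all $i$, so $j_{Z,F_{\alpha_0}}^{n-1}(L)=0$, contradicting $F_{\alpha_0}\in\mathcal F$. (In particular this also rules out $F_\infty=\varnothing$, since $H^{n-1}(\varnothing;G)=0$ would force the image to vanish.) Hence $F_\infty\in\mathcal F$ is an upper bound for the chain in $(\mathcal F,\preceq)$, and Zorn's lemma yields the desired maximal element, i.e.\ an $(n-1,G)$-bubble $F\subseteq Z$ with respect to $L$. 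The ring/submodule version is identical, since finite generation works the same way and the maps $j_{\cdot,\cdot}^{n-1}$ are $G$-module homomorphisms.
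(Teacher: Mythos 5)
Your proof is correct, but it takes a genuinely different route from the paper's. You run a single Zorn's lemma argument on the family of all closed $F'\subseteq Z$ with $j_{Z,F'}^{n-1}(L)\neq 0$, ordered by reverse inclusion, and finite generation enters exactly where it must: in showing that $L$ cannot die in the direct limit $H^{n-1}\bigl(\bigcap_\alpha F_\alpha;G\bigr)=\varinjlim_\alpha H^{n-1}(F_\alpha;G)$ unless all $k$ generators already die at a single member of the chain (which exists because a finite subset of a chain has a least element). The paper instead argues by induction on the number of generators of $L$: it first takes a carrier $F_1\subseteq Z$ for $\gamma_1$, i.e.\ a minimal closed set on which $\gamma_1$ survives --- whose existence is precisely the single-generator instance of your Zorn argument --- and observes that if some proper closed $B^*\subsetneqq F_1$ still carries a nonzero part of $L$, then $j_{Z,B^*}^{n-1}(\gamma_1)=0$ by minimality of $F_1$, so $j_{Z,B^*}^{n-1}(L)$ is generated by the images of the remaining $k$ generators and the inductive hypothesis applies inside $B^*$. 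Both arguments rest on the same two pillars (continuity of \v{C}ech cohomology and finite generation of $L$), but yours treats all generators simultaneously and yields a slightly cleaner write-up; in particular your dichotomy for the chain condition is airtight, whereas the paper's case split between ``$H^{n-1}(B;G)=0$ for all proper closed $B\subsetneqq F_1$'' and ``$j_{Z,B^*}^{n-1}(L)\neq 0$ for some proper closed $B^*$'' is not literally exhaustive as stated (the intended first case is $j_{Z,B}^{n-1}(L)=0$ for all proper closed $B$). What the paper's induction buys is an explicit description of the bubble as the end of a nested sequence of carriers of the individual generators, which matches how carriers are used elsewhere in the paper.
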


\begin{proof}
  If $L$ has one generator $\gamma$, we just take a closed set $F\subset Z$, which is a carrier for $\gamma$. Then $\beta=j_{Z,F}^{n-1}(\gamma)$ and $\beta_B=j_{Z,B}^{n-1}(\gamma)$ are  generators, respectively, of $j_{Z,F}^{n-1}(L)\subset H^{n-1}(F;G)$ and $j_{Z,B}^{n-1}(L)\subset H^{n-1}(B;G)$ for any closed set $B\subset Z$. So, $j_{Z,B}^{n-1}(L)=0$ for every proper closed subset $B$ of $F$ because $j_{Z,B}^{n-1}(\gamma)=j_{F,B}^{n-1}(\beta)=0$. Hence, $F$ is an $(n-1,G)$-bubble with respect to $L$. Suppose our lemma is true for any such set $Z$  and a subgroup (resp., a submodule)
 $L\subset H^{n-1}(Z;G)$ with $\leq k$ generators. In case $L$ has $k+1$ generators
$\gamma_1,..,\gamma_k,\gamma_{k+1}$, we first take a closed non-empty set $F_1\subset Z$, which is a carrier for $\gamma_1$. So, $j_{Z,B}^{n-1}(\gamma_1)=0$ for any proper closed subset $B$ of $F_1$. If $H^{n-1}(B;G)=0$ for all closed $B\subsetneqq F_1$, then $F_1$ is as required. If $j_{Z,B^*}^{n-1}(L)\neq 0$ for some closed proper set $B^*\subset F_1$, then $j_{Z,B^*}^{n-1}(L)$ is generated by the set $\{j_{Z,B^*}^{n-1}(\gamma_i):i=2,3,..,k+1\}$.  According to our inductive assumption, there exists a closed non-empty set $F\subset B^*$ being an $(n-1,G)$-bubble in $B^*$ with respect to $j_{Z,B^*}^{n-1}(L)$. Then $F$ is an $(n-1,G)$-bubble in $Z$ with respect to $L$.
\end{proof}

\section{Proof of Theorems 1.1, 1.2 and Corollary 1.3}

In this section $X$ continues to be as in Section 2, but $G$ is assumed to a countable PID (the last condition is used in the proof of Claim 1).


\textit{Proof of Theorem $1.1$.}
As in the proof of Proposition 2.1, we may suppose that $X$ is connected and $H^n(C;G)=0$ for any closed proper subset $C$ of $X$. Moreover, we equip $X$ with a convex metric $d$ generating its topology (such a metric exists, see \cite{bi}).
According to \cite[Theorem 2]{ku}, there exists a closed subset $Y\subset X$ with $\dim_GY=n$ and a dense open subset $D$ of $Y$ satisfying the following property: any $y\in D$ has sufficiently small neighborhoods $U_y$ in $Y$ such that the homomorphism $j_{\overline U_y,bd_Y\overline U_y}^{n-1}$ is not surjective (here $bd_Y\overline U_y$ denotes the boundary of $\overline U_y$ in $Y$).
Because $Y$ has a non-empty interior in $X$ (by Corollary 2.3), there exists a point $x\in int(Y)\cap D$, a connected open neighborhood $W_{x}$ of $x$ in $X$ and an element $\alpha_x\in H^{n-1}(bd\overline W_x;G)$ such that $\alpha_x$ is not extendable over $\overline W_{x}$.  We can suppose that $\overline W_{x}$ is contractible in a proper subset of $X$. So, by Lemma 2.6, $\overline W_x$ is an
$(n-1)$-cohomology membrane for $\alpha_x$ spanned on $bd\overline W_x$.
Because $X$ is homogeneous, it suffices to construct the required base $\mathcal B_x$ at that particular point $x$.
We define
$\mathcal B_x'$ to be the family of all open connected subsets $U\subset X$ containing $x$ such that $U=int(\overline{U})$ and $\overline U$ is contractible in $W_x$. Then $\mathcal B_x'$ is a local base at $x$ and $bdU=bd\overline U$ for all $U\in\mathcal B_x'$

\smallskip
\textit{Claim $1$. Every $U\in\mathcal B_x'$ has the following properties:
\begin{itemize}
\item[(i)] $\overline U$ is an $(n-1)$-cohomology membrane for some element of the group $H^{n-1}(bdU;G)$;
\item[(ii)] the module $L_U=j_{\overline W_{x}\setminus U,bdU}^{n-1}\big(H^{n-1}(\overline W_{x}\setminus U;G)\big)\subset H^{n-1}(bdU;G)$ is non-trivial and finitely generated;
\item[(iii)] the module $H^{n-1}(bdU;G)$ is finitely generated provided the homomorphism $j_{\overline U,bdU}^{n-1}$ is trivial.
\end{itemize}}

\smallskip
We fix $U\in\mathcal B_x'$ and
a non-zero element $\alpha_x\in H^{n-1}(bd\overline W_x;G)$ such that $\overline W_{x}$ is an $(n-1)$-cohomology membrane for $\alpha_x$ spanned on $bd\overline W_{x}$. Then $\alpha_x$ is not extendable over $\overline W_{x}$ but it is extendable over every closed proper subset of $\overline W_{x}$.  Next, extend $\alpha_x$ to an element
$\widetilde\alpha_x\in H^{n-1}(\overline W_{x}\setminus U;G)$. Obviously,  $bdU\subset\overline W_{x}\setminus U$. Hence, the element $\gamma_U=j_{\overline W_{x}\setminus U,bdU}^{n-1}(\widetilde\alpha_x)\in H^{n-1}(bdU;G)$ is not extendable over $\overline U$ (otherwise $\alpha_x$ would be extendable over $\overline W_{x}$), in particular $\gamma_U\neq 0$. Since $U$ is connected, by Lemma 2.6, $\overline U$ is an $(n-1)$-cohomology membrane for  $\gamma_U$ spanned on $bdU$.

To prove the second item $(ii)$, let $U_0$ be an open subset of $X$ with $\overline U_0\subset U$. Since $\gamma_U\in L_U$ and $\gamma_U\neq 0$, $L_U\neq 0$. For any $\gamma\in L_U$ there are two possibilities: either $\gamma$ is extendable over $\overline U$ or it is not extendable over $\overline U$. In both cases $\gamma$ is extendable over the set $\overline U\setminus U_0$. Indeed, this is clear if $\gamma$ is extendable on $\overline U$.
If $\gamma$ is not extendable over $\overline U$, then $\overline U$ is an $(n-1)$-cohomology membrane for  $\gamma$ spanned on $bdU$ (Lemma 2.6). Consequently, $\gamma$ is extendable over $\overline U\setminus U_0$ because $\overline U\setminus U_0$ is a proper subset of $\overline U$ containing $bdU$.
Hence, every $\gamma\in L_U$ is extendable over the set
$\overline W_x\setminus U_0$, which is closed in $X$ and contains $bdU$ in its interior. Therefore, by \cite[Theorem 17.4 and Corollary 17.5, p.127]{br}, $L_U$ is finitely generated.
If $j_{\overline U,bdU}^{n-1}\big(H^{n-1}(\overline U;G)\big)=0$, then every $\gamma\in H^{n-1}(bdU;G)$ is extendable over
$\overline W_{x}\setminus U$, see Lemma 2.5. Hence, $H^{n-1}(bdU;G)\subset L_U$, and item $(ii)$ yields item $(iii)$.


\smallskip
Let
$\mathcal B_x''$ be the family of all $U\in\mathcal B_x'$ satisfying  the following condition: $bdU$ contains a continuum $F_U$ such that $X\setminus F_U$ has exactly two components and $F_U$ is an $(n-1,G)$-bubble with respect to the module $L_U$.

\smallskip
\textit{Claim $2$. $\mathcal B_x''$ is a local base at $x$.}

\smallskip
We fix $W_0\in\mathcal B_x'$ and for every $\delta>0$ denote by $B(x,\delta)$ the open ball in $X$ with a center $x$ and a radius $\delta$. There exists
$\varepsilon_x>0$ such that $B(x,\delta)\subset W_0$ for all $\delta\leq\varepsilon_x$. Since $d$ is a convex metric, each $B(x,\delta)$ is a connected open set such that $int(\overline{B(x,\delta)})=B(x,\delta)$. Because $\overline W_0$ is contractible in $W_x$, so is $\overline{B(x,\delta)}$.
Hence, all $U_\delta=B(x,\delta)$, $\delta\leq\varepsilon_x$, belong to $\mathcal B_x'$. Consequently, by Claim 1, the modules
$L_\delta=j_{\overline W_{x}\setminus U_\delta,bdU_\delta}^{n-1}\big(H^{n-1}(\overline W_{x}\setminus U_\delta;G)\big)$ are finitely generated.
Then, by Lemma 2.8, there exists a closed non-empty set $F_\delta\subset bdU_\delta$ with $F_\delta$ being an $(n-1;G)$-bubble with respect to $L_\delta$. Because $F_\delta$ is a carrier for any $\gamma\in L_\delta$, Lemma 2.7 yields that each $F_\delta$ is a continuum.
 Let us show that
the family $\{F_\delta:\delta\leq\varepsilon_x\}$ is uncountable. Since the function $f\colon X\to\mathbb R$, $f(y)=d(x,y)$, is continuous and $W_0$ is connected, $f(W_0)$ is an interval containing $[0,\varepsilon_x]$ and $f^{-1}([0,\varepsilon_x))=B(x,\varepsilon_x)\subset W_0$. So,  $f^{-1}(\delta)=bdU_\delta\neq\varnothing$ for all $\delta\leq\varepsilon_x$. Hence, the family $\{F_\delta:\delta\leq\varepsilon_x\}$ is indeed uncountable
and consist of disjoint continua. Moreover, $H^{n-1}(F_\delta;G)\neq 0$ and, according to Lemma 2.4,  $F_\delta$ separates $X$. So, each $X\setminus F_\delta$ has at least two components. Then, by \cite[Theorem 8]{ch}, there exists $\delta_0\leq\varepsilon_x$ such that $X\setminus F_{\delta_0}$ has exactly two components. Therefore, $U_{\delta_0}=B(x,\delta_0)\in\mathcal B_x''$ and it is contained in $W_0$. This completes the proof of Claim 2.

\smallskip
Now, let $\mathcal B_x$ be the subfamily of all $U\in\mathcal B_x''$ such that $H^{n-1}(bdU;G)\neq 0$ and both $U$ and $X\setminus\overline U$ are connected.

\smallskip
\textit{Claim $3$. $\mathcal B_x$ is a local base at $x$.}

\smallskip
 We take an arbitrary neighborhood $U_0$ of $x$ such that $\overline U_0$ is contractible in $W_x$ and shall construct a member of $\mathcal B_x$ contained in $U_0$. To this end let $\varepsilon=d(x,X\setminus U_0)$. According to the Effros' theorem \cite{e}, there is $\eta>0$ such that if $y,z\in X$ with $d(y,z)<\eta$, then $h(y)=z$ for some homeomorphism $h\colon X\to X$, which is $\varepsilon/2$-close to the identity on $X$. Now, choose a connected neighborhood $W$ of $x$ with  $\overline{W}\subset B(x,\varepsilon/2)$ and $diam(\overline W)<\eta$.
Finally, take $U\in\mathcal B_x''$ such that $\overline{U}$ is contractible in $W$.
  There exists a continuum $F_U\subset bdU$ such that $X\setminus F_U$ has exactly two components and $F_U$ is an $(n-1,G)$-bubble with respect to the module $L_U=j_{\overline W_{x}\setminus U,bdU}^{n-1}\big(H^{n-1}(\overline W_{x}\setminus U;G)\big)$ (see Claim 2). If $F_U=bdU$ we are done, for $U$ is the desired member of $\mathcal B_x$.

   Suppose that $F_U$ is a proper subset of $bdU$. Because $F_U$ is an $(n-1,G)$-bubble with respect to $L_U$, $j_{bdU,F_U}^{n-1}(L_U)\neq 0$. Hence, there exists $\gamma\in L_U$ such that $\beta=j_{bdU,F_U}^{n-1}(\gamma)\neq 0$. Because $F_U$ (as a subset of $\overline U$) is contractible in $W$ and
   $\overline W$ (as a subset of $\overline W_x$) is contractible in a proper subset of $X$, we can apply
Lemma 2.4 to conclude that $F_U$ separates
$\overline{W}$. So, $\overline{W}\setminus F_U=V_1\cup V_2$ for some open, non-empty disjoint subsets $V_1,V_2\subset\overline{W}$. Since $U$ is a connected subset of $\overline{W}\setminus F_U$,  $U$ is contained in one of the sets $V_1,V_2$, say $U\subset V_1$. Hence, $F_U\cup\overline V_2\subset\overline W_{x}\setminus U$. Observe that $\gamma\in L_U$ implies $\gamma$ is extendable over $\overline W_{x}\setminus U$. Consequently, $\beta$ is also extendable over $\overline W_{x}\setminus U$, in particular $\beta$ is extendable over $F_U\cup\overline V_2$. On the other hand, $F_U$ (as a subset of $\overline U$) is contractible in $\overline{W}$, so $\beta$ is not extendable over $\overline{W}$ ( otherwise $\beta$ would be zero). Thus, since
$(F_U\cup\overline V_1)\cap (F_U\cup\overline V_2)=F_U$,
$\beta$ is not extendable over $F_U\cup\overline V_1$.
 Let
$\beta'=j_{F_U,F'}^{n-1}(\beta)$, where $F'=\overline{V}_1\cap F_U$
(observe that $F'\neq\varnothing$ because $\overline W$ is connected). If $F'$ is a proper subset of $F_U$, then $\beta'=0$ (recall that $j_{bdU,F'}^{n-1}(\gamma)=\beta'$ and $F_U$ being a carrier for any non-trivial element of $j_{bdU,F_U}^{n-1}(L_U)$ yields $j_{bdU,Q}^{n-1}(L_U)=0$ for any proper closed subset $Q$ of $F_U$). So, $\beta'$ would be extendable over $\overline V_1$, which yields $\beta$ is extendable over $F_U\cup\overline V_1$, a contradiction. Therefore, $F'=F_U\subset\overline{V}_1$ and
$\beta$ is not extendable over $\overline{V}_1$. Consequently, there exists
an $(n-1)$-cohomology membrane $P_\beta\subset\overline{V}_1$ for $\beta$ spanned on $F_U$. By Corollary 2.2, $V=P_\beta\setminus F_U$ is a connected open set in $X$ whose boundary, according to Proposition 2.1, is the set
 $F''=\overline{X\setminus P_\beta}\cap\overline{P_\beta\setminus F_U}\subset F_U$ (we can apply Proposition 2.1 and Corollary 2.2 because $P_\beta$, as a subset of $\overline W_x$, is contractible in a proper subset of $X$).
  As above, using that $\beta$ is not extendable over $P_\beta$ and $j_{bdU,Q}^{n-1}(L_U)=0$ for any proper closed subset $Q\subset F_U$, we can show that $F''=F_U$ and $bd\overline V=F_U$.
 Summarizing the properties of $V$, we have that
 $\overline{V}$ is contractible in $W_x$ (because so is $\overline U_0$),
 $V=int(\overline V)$ (because $F_U=bd\overline V$) and $V$ is connected. Moreover, since $X\setminus F_U$ is the union of the open disjoint non-empty sets $V$ and $X\setminus P_\beta$ such that $V$ is connected and
 $X\setminus F_U$ has exactly two components, $X\setminus\overline V$ is also connected. Because $F_U$ is an $(n-1,G)$-bubble with respect to the non-trivial module $L_U$, $H^{n-1}(bdV;G)\neq 0$. Thus, if $V$ contains $x$, then $V$ is the desired member of $\mathcal B_x$.

  If $V$ does not contain $x$, we take a point $y\in V$ and a homeomorphism $h$ on $X$ such that $h(y)=x$ and $d(z,h(z))<\varepsilon$ for all $z\in X$. Such a homeomorphism exists because $diam(\overline W)<\eta$ and $x,y\in\overline W$.
 Then $h(V)\subset U_0$ (this inclusion follows from the choice of $\varepsilon$ and the fact that $h$ is $\varepsilon$-close to the identity on $X$). So,
$\overline{h(V)}$ is contractible in $W_x$. Since the remaining properties from the definition of $\mathcal B_x$ are invariant under homeomorphisms,
 $h(V)$ is the desired member of $\mathcal B_x$, which completes the proof of Claim 3.

The sets $U\in\mathcal B_x$ satisfy condition $(1)$ from Theorem 1.1 (according to the definition of $\mathcal B_x$). Next claim completes the proof of Theorem 1.1.

\smallskip
\textit{Claim $4$. Every $U\in\mathcal B_x$ satisfies conditions $(2)$ and $(3)$ from Theorem $1.1$.}

\smallskip
Recall that each $\overline U$ is contractible in the set $W_x$ and $\overline W_x$ is contractible in a proper subset of $X$. Then, by Lemma 2.4,
$H^{n-1}(\overline U;G)=0$ because $\overline U$ does not separate $X$. Therefore, every non-trivial element  $\gamma\in H^{n-1}(bdU;G)$ is not extendable over $\overline U$. Consequently, according to Lemma 2.6, $\overline U$ is an $(n-1)$-cohomology membrane for $\gamma$ spanned on $bdU$.
So, $U$ satisfies condition $(2)$.

Since $H^{n-1}(\overline U;G)=0$, the homomorphism $j_{\overline{U},bdU}^{n-1}$ is trivial. Thus, Lemma 2.5 yields
$H^{n-1}(bdU;G)=j_{\overline W_{x}\setminus U,bdU}^{n-1}\big(H^{n-1}(\overline W_{x}\setminus U;G)\big)$ and, by Claim 1(iii), $H^{n-1}(bdU;G)$  is finitely generated. Suppose there exists a proper closed subset $F\subset bd U$ and a non-trivial element $\alpha\in H^{n-1}(F;G)$. Observe that $\alpha$ is not extendable over $\overline U$ because $H^{n-1}(\overline U;G)=0$. Hence, there is an $(n-1)$-cohomology membrane $K_\alpha\subset\overline U$ for $\alpha$ spanned on $F$. Because $\overline U\setminus F$ is connected (recall that $U$ is a dense connected subset of $\overline U\setminus F$) and $K\setminus F$ is both open and closed in $\overline U\setminus F$ (by Corollary 2.2), $K_\alpha=\overline U$. Finally, according to Proposition 2.1, we have $(K_\alpha\setminus F)\cap\overline{X\setminus K_\alpha}=\varnothing$. On the other hand, any point from $bdU\setminus F$ belongs to $(K_\alpha\setminus F)\cap\overline{X\setminus K_\alpha}$, a contradiction. Therefore, $bdU$ is an $(n-1,G)$-bubble and $U$ satisfies condition $(3)$. $\Box$

\smallskip
\textit{Proof of Theorem $1.2$.} If $K=X$, Theorem 1.2 is obviously true. Suppose $K$ is a proper closed subset of $X$, which is an $(n-1)$-cohomology membrane spanned on $A$ for some $\gamma\in H^{n-1}(A;G)$, but there exists a point $a\in(K\setminus A)\cap\overline{X\setminus K}$. Take a neighborhood $U\in\mathcal B_a$  such that $\overline{U}\cap A=\varnothing$. Since $K\setminus U$ is a closed proper subset of $K$ containing $A$, $\gamma$ is extendable over $K\setminus U$. So, there exists $\beta\in H^{n-1}(K\setminus U;G)$ with $j_{K\setminus U,A}^{n-1}(\beta)=\gamma$. Since $K\setminus A$ is connected (see Corollary 2.2), $bdU\cap K\neq\varnothing$.
Then
$\beta_1=j_{K\setminus U,bdU\cap K}^{n-1}(\beta)$ is a non-zero element of $H^{n-1}(bdU\cap K;G)$ (otherwise $\beta_1$ would be extendable over
$\overline{U}\cap K$, and hence, $\gamma$ would be extendable over $K$). Since $\dim_GbdU\leq n-1$, $\beta_1$ is extendable to an element
$\tilde{\beta}_1\in H^{n-1}(bdU;G)$.
So, $\tilde{\beta}_1$ is a non-zero element of $H^{n-1}(bdU;G)$ and, by Theorem 1.1(2), $\overline{U}$ is an $(n-1)$-cohomology membrane for $\tilde{\beta}_1$ spanned on $bdU$. Then $\overline{U}\cap K\neq\overline{U}$ would yields that $\tilde{\beta}_1$ is extendable over $\overline{U}\cap K$. Hence, $\gamma$ would be extendable over $K$, a contradiction. Thus, $\overline{U}\subset K\setminus A$ which contradicts $a\in\overline{X\setminus K}$. Therefore, $(K\setminus A)\cap\overline{X\setminus K}=\varnothing$.
$\Box$

\textit{Proof of Corollary $1.3$.}
It was shown in \cite{l} and \cite{s} that the cohomology membranes' property from Theorem 1.2 implies the invariance of domain for homogeneous or locally homogeneous $ANR$-spaces $X$ with $\dim X=n$. Similar arguments provide the proof when $\dim_GX=n$.
Take a point $y\in V=f(U)$ and let $x=f^{-1}(y)$.
Choose a connected open set $W\in\mathcal B_x$ such that $\overline{W}\subset U$. Then $\overline{W}$ is an $(n-1)$-cohomology membrane for some $\gamma\in H^{n-1}(bdW;G)$ spanned on $bdW$. Since $f(\overline{W})$ is homeomorphic to $\overline{W}$, it is an $(n-1)$-cohomology membrane for $(f^*)^{-1}(\gamma)\in H^{n-1}(f(bdW);G)$ spanned on $f(bdW)$. Then, by Theorem 1.2, $f(\overline{W})\setminus f(bdW)$ does not intersect
$\overline{X\setminus f(\overline{W})}$. This means that
$f(\overline{W})\setminus f(bdW)$ is an open set in $X$, which contains $y$ and is contained in $V$. So, $V$ is also open. $\Box$

\section{Proof of Theorem 1.4 and Corollary 1.5}

Let $\widehat{H}_*$ be the exact homology (see \cite{ma}, \cite{sk}),
and $\mathbb Q, \mathbb R$ denote the groups of rational and the real numbers, respectively.
It is well known that for locally compact metric spaces the exact homology is isomorphic with the Steenrod homology.
For any abelian group $G$ the homological dimension $h\dim_GY$ of a compactum $Y$ is the greatest integer $m$ such that $\widehat{H}_{m}(Y,A;G)\neq 0$ for some closed $A\subset Y$ (if there is no such $m$, then $h\dim_GY=\infty$). It follows from the exact sequence
$$0\to\mathrm{Ext}(H^{m+1}(Y,A),G)\to\widehat{H}_{m}(Y,A;G)\to\mathrm{Hom}(H^{m}(Y,A),G)\to 0$$
that $h\dim_GY\leq\dim Y$. Moreover, by \cite{sk1}, $h\dim_GX$ is the greatest $m$ such that the local homology group $\widehat{H}_m(X,X\setminus x;G)=\varinjlim_{x\in U}\widehat{H}_m(X,X\setminus U;G)$ is not trivial for some $x\in X$.

\textit{Proof of Theorem $1.4$.}

$(1)\Rightarrow (2)$. Suppose $X$ is dimensionally full-valued. Then, according to \cite{ha}, $h\dim_{\mathbb Z}X=\dim_{\mathbb Z}X=n$. Hence,
$\widehat{H}_{n}(X,X\setminus x)\neq 0$ for some $x\in X$ (the coefficient group $\mathbb Z$ in all homology and cohomology groups is suppressed).
Because $\dim X=n$, the groups $\widehat{H}_{n}(X,X\setminus x)$ and $H_{n}(X,X\setminus x)$ are isomorphic, see \cite[Theorem 4]{sk}. So,
$H_{n}(X,X\setminus x)\neq 0$.

  $(2)\Rightarrow (3)$. Let $H_{n}(X,X\setminus x)\neq 0$ for some $x\in X$. Then $H_{n}(X,X\setminus U)\neq 0$ for sufficiently small neighborhoods $U$ of $x$ in $X$.
Since by \cite[Theorem 4]{sk} the groups $H_{n}(X,X\setminus U)$ and $\widehat{H}_{n}(X,X\setminus U)$ are isomorphic, $\widehat{H}_{n}(X,X\setminus V)\neq 0$
for some neighborhood $V$ of $x$. On the other hand, $\dim X=n$ implies
 $H^{n+1}(X,X\setminus V)=0$. Hence, it follows from the exact sequence
 $$\mathrm{Ext}(H^{n+1}(X,X\setminus V),\mathbb Z)\to\widehat{H}_{n}(X,X\setminus V)\to\mathrm{Hom}(H^{n}(X,X\setminus V),\mathbb Z)\to 0$$
 that there exists a nontrivial homomorphism from $H^{n}(X,X\setminus V)$ into $\mathbb Z$. This yields that $H^{n}(X,X\setminus V)$ contains elements of
 infinite order. Thus, $H^{n}(X,X\setminus V)\otimes\mathbb Q\neq 0$ and, by the universal coefficients formula, $H^{n}(X,X\setminus V;\mathbb Q)\neq 0$.
 So, $\dim_{\mathbb Q}X=n$. Because $X$ is an $ANR$, we have the following inequalities $\dim_{\mathbb Q}X\leq\dim_{\mathbb S^1}X\leq\dim X$, see Example 1.3(1) and Theorem 12.3(2) from \cite{dr}. Therefore,  $\dim_{\mathbb S^1}X=n$.

$(3)\Rightarrow (1)$. Assume $\dim_{\mathbb S^1}X=n$. The exact sequence $$0\rightarrow\mathbb Z\rightarrow\mathbb R\rightarrow\mathbb S^1\rightarrow 0$$
implies that $\dim_{\mathbb S^1}X\leq\max\{\dim_{\mathbb R}X,\dim X-1\}$, see \cite{dr}. Hence, $\dim_{\mathbb R}X=n$. According to \cite{ha}, both the homological and the cohomological dimensions with respect to any field coincide, so we have
$h\dim_{\mathbb R}X=\dim_{\mathbb R}X=n$. Thus, there exist $x\in X$ and a neighborhood $U$ of $x$ in $X$ such that $\widehat{H}_n(X,X\setminus U;\mathbb R)\neq 0$. As in the proof of the implication $(2)\Rightarrow (3)$, considering the short exact sequence
$$\mathrm{Ext}(H^{n+1}(X,X\setminus U),\mathbb Z)\to\widehat{H}_{n}(X,X\setminus U)\to\mathrm{Hom}(H^{n}(X,X\setminus U),\mathbb Z)\to 0,$$
we can show that $\dim_{\mathbb Q}X=n$. This implies that $X$ is dimensionally full-valued. $\Box$

\textit{Proof of Corollary $1.5$.} Let $X$ be a metric homogeneous $ANR$ compactum with $\dim X=3$. According to \cite[Corollary 2.7]{kru}, we have $\overline{H}_3(X,X\setminus x)\neq 0$, where $\overline{H}_3(X,X\setminus x)$ denotes the singular homology group. On the other hand, by \cite[Lemma 4]{ko1}, the groups
$\overline{H}_3(X,X\setminus x)$ and $H_3(X,X\setminus x)$ are isomorphic. Then, Theorem 1.4 yields that $X$ is dimensionally full-valued. $\Box$

\textbf{Acknowledgments.} The author would like to express his gratitude to A. Dranishnikov, J. Bryant, A. Karassev and G. Skordev for their helpful comments.
The author also thanks the referees and the editors for their valuable remarks and suggestions which improved the paper.

\end{document}